\theoremstyle{plain}
\newtheorem{thm}{Theorem}[section]
\newtheorem*{thm*}{Theorem}
\newtheorem*{mainthm*}{Main Theorem}
\newtheorem{lem}[thm]{Lemma} \newtheorem*{lem*}{Lemma}
 \newtheorem*{claim*}{Claim}
\newtheorem{cor}[thm]{Corollary} \newtheorem*{cor*}{Corollary}
\newtheorem{prop}[thm]{Proposition} \newtheorem*{prop*}{Proposition}
\theoremstyle{definition}
\newtheorem{defn}[thm]{Definition} \newtheorem*{defn*}{Definition}
\theoremstyle{remark}
\newtheorem{rem}[thm]{Remark} \newtheorem*{rem*}{Remark}
 \newtheorem*{example*}{Example}
\newtheorem{question}[thm]{Question} \newtheorem*{question*}{Question}
\newcommand{\powerset}{\mathcal{P}}
\newcommand{\Ord}{\mathrm{Ord}}
\newcommand{\forces}{\mathbin{\Vdash}}
\newcommand{\bfSigma}{\utilde{\bf{\Sigma}}}
\newcommand{\bfPi}{\utilde{\bf{\Pi}}}
\newcommand{\bfDelta}{\utilde{\bf{\Delta}}}
\newcommand{\jhat}{\mathbf {\hat \jmath}}
\DeclareMathOperator{\p}{p}
\DeclareMathOperator{\Col}{Col}
\DeclareMathOperator{\crit}{crit}
\DeclareMathOperator{\ran}{ran}
\DeclareMathOperator{\otp}{o.\!t.}
\DeclareMathOperator{\uB}{uB}
\begin{document}

\author{Ralf Schindler and Trevor M.\ Wilson}
\title{The consistency strength of the perfect set property for universally Baire sets of reals}

\address[R.\ Schindler]{Institut f\"{u}r mathematische Logik und Grundlagenforschung\\Universit\"{a}t M\"{u}nster\\Einsteinstr.\ 62, 48149\\M\"{u}nster, Germany}
\email{rds@math.uni-muenster.de} 
\urladdr{https://ivv5hpp.uni-muenster.de/u/rds/}

\address[T.\ Wilson]{Department of Mathematics\\Miami University\\Oxford, Ohio 45056\\USA}
\email{twilson@miamioh.edu} 
\urladdr{https://www.users.miamioh.edu/wilso240}

\begin{abstract}
 We show that the statement ``every universally Baire set of reals has the perfect set property'' is equiconsistent modulo ZFC with the existence of a cardinal that we call a virtually Shelah cardinal.  These cardinals resemble Shelah cardinals but are much weaker: if $0^\sharp$ exists then every Silver indiscernible is virtually Shelah in $L$.  We also show that the statement $\uB = {\bfDelta}^1_2$, where $\uB$ is the pointclass of all universally Baire sets of reals, is equiconsistent modulo ZFC with the existence of a $\Sigma_2$-reflecting virtually Shelah cardinal.
\end{abstract}

\maketitle

\section{Introduction}

A set of reals, meaning a subset of the Baire space $\omega^\omega$, is called \emph{universally Baire} if its preimages under all continuous functions from all topological spaces have the Baire property (Feng, Magidor and Woodin \cite{FenMagWoo}.) We denote the pointclass of all universally Baire sets of reals by \emph{uB}. The universally Baire sets of reals include the ${\bfSigma}^1_1$ (analytic) and ${\bfPi}^1_1$ (coanalytic) sets of reals, but not necessarily the ${\Delta^1_2}$ sets of reals. Every universally Baire set of reals is Lebesgue measurable and has the Baire property. If there is a Woodin cardinal then every universally Baire set of reals has the perfect set property; on the other hand, if $\omega_1^L = \omega_1$ then there is a set of reals that is $\Pi^1_1$, hence universally Baire, and fails to have the perfect set property (G\"{o}del; see Kanamori \cite[Theorem 13.12]{KanHigherInfinite}.)

In this article we will characterize the consistency strength of the theory ZFC + ``every universally Baire set of reals has the perfect set property''  using a virtual large cardinal property that we call the virtual Shelah property. We work in ZFC unless otherwise stated.

Many large cardinal properties are defined in terms of elementary embeddings $j : M \to N$ where $M$ and $N$ are structures. (If $M$ and $N$ are sets with no structure given, we consider them as structures with the $\in$ relation.)  Such a definition can be weakened to a ``virtual'' large cardinal property by only requiring $j$ to exist in some generic extension of $V$.

Examples of virtual large cardinal properties are remarkability, which is a virtual form of Magidor's characterization of supercompactness (Schindler \cite[Lemma 1.6]{SchProperForcingRemarkableII},) and the generic Vop\v{e}nka Principle, which is a virtual form of Vop\v{e}nka's Principle (Bagaria, Gitman, and Schindler \cite{BagGitSchGenericVopenka}.) For an introduction to virtual large cardinal properties, see Gitman and Schindler \cite{GitSchVirtualLargeCardinals}.  For an application of virtual large cardinals to descriptive set theory involving $\aleph_1$-Suslin sets instead of universally Baire sets, see Wilson \cite{WilGenericVopenka}.

 For structures $M$ and $N$ in $V$, if some generic extension of $V$ contains an elementary embedding of $M$ into $N$ then by the absoluteness of elementary embeddability of countable structures (see Bagaria, Gitman, and Schindler \cite[Lemma 2.6]{BagGitSchGenericVopenka}) every generic extension of $V$ by the poset $\Col(\omega,\mathcal{M})$ contains an elementary embedding of $M$ into $N$.  The converse implication holds also, of course.  We abbreviate these equivalent conditions by the phrase ``there is a generic elementary embedding of $M$ into $N$.''
 
\begin{defn}\label{defn:kappa-is-crit-j}
 A cardinal $\kappa$ is \emph{virtually Shelah} if for every function $f : \kappa \to \kappa$ there is an ordinal $\lambda > \kappa$, a transitive set $M$ with $V_\lambda \subset M$, and a generic elementary embedding $j : V_\lambda \to M$ with $\crit(j) = \kappa$ and $j(f)(\kappa) \le \lambda$.
\end{defn}

The virtual Shelah property follows immediately from the Shelah property but is much weaker: Proposition \ref{prop:silver-indiscernibles-are-v-shelah} will show that if $0^\sharp$ exists then every Silver indiscernible is virtually Shelah in $L$, a result that is typical of virtual large cardinal properties.

Our main result is stated below. It relates models of ZFC with virtually Shelah cardinals to models of ZFC in which the universally Baire sets are few in number and have nice properties. It also includes a combinatorial statement about the order type of countable sets of ordinals.

\begin{thm}\label{thm:v-Shelah}
 The following statements are equiconsistent modulo ZFC.
 \begin{enumerate}
  \item\label{item:equi-v-Shelah} There is a virtually Shelah cardinal.
  \item\label{item:omega-1-uB-sets} $\left|\uB\right| = \omega_1$.
  \item\label{item:equi-L-R-uB-LM} Every set of reals in $L(\mathbb{R},\uB)$ is Lebesgue measurable.
  \item\label{item:equi-L-R-uB-PSP} Every set of reals in $L(\mathbb{R},\uB)$ has the perfect set property.
  \item\label{item:equi-uB-PSP} Every universally Baire set of reals has the perfect set property.
  \item\label{item:equi-combinatorial} For every function $f: \omega_1 \to \omega_1$ there is an ordinal $\lambda > \omega_1$ such that for a stationary set of $\sigma \in \powerset_{\omega_1}(\lambda)$ we have $\sigma \cap \omega_1 \in \omega_1$ and $\otp(\sigma) \ge f(\sigma \cap \omega_1)$.
 \end{enumerate}
\end{thm}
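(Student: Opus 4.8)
The plan is to make the combinatorial statement~(\ref{item:equi-combinatorial}) the hub of the argument: it is a $\mathrm{ZFC}$ assertion about $\omega_1$ that I would link to the large cardinal~(\ref{item:equi-v-Shelah}) by the L\'evy collapse on one side and by reflection into $L$ on the other, while relating it to the descriptive-set-theoretic statements by $\mathrm{ZFC}$ implications. The first step is to read~(\ref{item:equi-combinatorial}) as the virtual Shelah property of Definition~\ref{defn:kappa-is-crit-j} reflected to $\omega_1$. For countable $X \prec H_\theta$ with $\lambda,f \in X$ and $\sigma = X \cap \lambda$, the requirement $\sigma \cap \omega_1 \in \omega_1$ says exactly that the transitive collapse $\pi$ of $X$ is the identity below $\delta := \sigma \cap \omega_1$ and sends $\omega_1$ to $\delta$; hence $j := \pi^{-1}$ is an elementary embedding with $\crit(j) = \delta$ and $j(\delta) = \omega_1$, the height $\otp(\sigma) = \pi(\lambda)$ is the height of the collapsed structure, and the clause $\otp(\sigma) \ge f(\sigma \cap \omega_1)$ becomes $j(\pi(f))(\crit j) \le \otp(\sigma)$, the collapse-side analogue of the condition $j(f)(\kappa) \le \lambda$ in Definition~\ref{defn:kappa-is-crit-j}. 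By the absoluteness of elementary embeddability of countable structures quoted in the introduction, these collapse embeddings exist in $V$ precisely when the corresponding generic embeddings of the collapsed structures exist, so~(\ref{item:equi-combinatorial}) genuinely asserts that stationarily many countable $\delta$ carry generic Shelah-type embeddings.

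With this dictionary I would establish the two bridges between~(\ref{item:equi-v-Shelah}) and~(\ref{item:equi-combinatorial}). Upward, a virtually Shelah cardinal $\kappa$ is inaccessible, so the L\'evy collapse $\Col(\omega,{<}\kappa)$ has the $\kappa$-chain condition, makes $\kappa = \omega_1$, and forces $\mathrm{CH}$; given $f : \omega_1 \to \omega_1$ in the extension I would dominate it pointwise by a ground-model $g : \kappa \to \kappa$, apply Definition~\ref{defn:kappa-is-crit-j} to $g$ to obtain $j : V_\lambda \to M$ with $\crit(j) = \kappa$ and $j(g)(\kappa) \le \lambda$, and read the required stationarily many countable $\sigma$ off $j$ and the genericity of the collapse, establishing~(\ref{item:equi-combinatorial}). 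Downward, working under~(\ref{item:equi-combinatorial}), applying the statement to a function coding the $L$-hierarchy first forces $\omega_1^L < \omega_1$, after which only countably many functions $\delta \to \delta$ lie in $L$ for each countable $\delta \ge \omega_1^L$; a single further application to a function dominating all of these then produces a countable $\delta$ that is virtually Shelah in $L$ by the collapse embeddings above. Thus $L \models$``there is a virtually Shelah cardinal,'' yielding $\Con(\mathrm{ZFC}+(\ref{item:equi-v-Shelah}))$; this is in harmony with Proposition~\ref{prop:silver-indiscernibles-are-v-shelah}, since when $0^\sharp$ exists the countable Silver indiscernibles are exactly such witnesses.

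Next I would show, in $\mathrm{ZFC}$, that~(\ref{item:equi-L-R-uB-LM}), (\ref{item:equi-L-R-uB-PSP}), (\ref{item:equi-uB-PSP}) and~(\ref{item:equi-combinatorial}) are equivalent, with~(\ref{item:omega-1-uB-sets}) handled separately. The engine is the consequence of~(\ref{item:equi-combinatorial}) that $\omega_1$ is inaccessible to every inner model $L[T]$ for $T$ a tree representing a universally Baire set, i.e.\ $\omega_1^{L[T]} < \omega_1$, proved by feeding a function that reads off the $L[T]$-hierarchy into the combinatorial statement exactly as in the $L$ case. Granting this, the perfect set property~(\ref{item:equi-uB-PSP}) follows from Mansfield--Solovay, since a universally Baire set with no perfect subset is contained in some such $L[T]$ and, if uncountable, forces $\omega_1^{L[T]} = \omega_1$; the regularity of all sets in $L(\mathbb{R},\uB)$ in~(\ref{item:equi-L-R-uB-LM}) and~(\ref{item:equi-L-R-uB-PSP}) follows from the same inaccessibility by Solovay's argument relative to the universally Baire predicate, and~(\ref{item:equi-L-R-uB-PSP}) trivially implies~(\ref{item:equi-uB-PSP}). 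For the converses I would argue contrapositively: from a failure of~(\ref{item:equi-combinatorial}), witnessed by some $f$, I would build an inner model $L[T]$ with $\omega_1^{L[T]} = \omega_1$ carrying a thin uncountable set and a non-measurable set, and check that these are universally Baire, refuting~(\ref{item:equi-uB-PSP}), (\ref{item:equi-L-R-uB-PSP}) and~(\ref{item:equi-L-R-uB-LM}). Finally, since $|\uB| \ge 2^{\aleph_0}$ always, statement~(\ref{item:omega-1-uB-sets}) both entails $\mathrm{CH}$ and asserts that no tree inner model contributes new universally Baire sets beyond $\omega_1$ many; it holds in the collapse extension of paragraph two, and conversely, if $\omega_1$ were not inaccessible to some such $L[T]$ then $L[T]$ would contribute more than $\omega_1$ distinct universally Baire sets, so~(\ref{item:omega-1-uB-sets}) also implies~(\ref{item:equi-combinatorial}).

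The hardest part will be the two-way bridge between the club-filter combinatorics of~(\ref{item:equi-combinatorial}) and the structure of $\uB$. In the forward direction one must ensure the order-type growth defeats every tree inner model $L[T]$ arising from a universally Baire set uniformly, rather than only the models $L[x]$ for reals $x$, which requires controlling the trees witnessing universal Baireness. In the backward direction one must manufacture a genuinely universally Baire---not merely projective---counterexample to regularity, or the requisite surplus of universally Baire sets for~(\ref{item:omega-1-uB-sets}), from a failure of~(\ref{item:equi-combinatorial}), and verify its absoluteness across all forcings. By contrast, the extraction of a virtually Shelah cardinal in $L$ in paragraph two, though it needs a careful diagonalization over the functions in $L$, is comparatively soft: the entire analysis sits below $0^\sharp$, so no genuine core model theory is required and $L$ suffices throughout.
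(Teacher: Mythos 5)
Your proposal's architecture requires statements \ref{item:equi-L-R-uB-LM}, \ref{item:equi-L-R-uB-PSP}, \ref{item:equi-uB-PSP} to be ZFC-\emph{equivalent} to statement \ref{item:equi-combinatorial}, and the ``engine'' you propose for the missing directions --- that statement \ref{item:equi-combinatorial} implies $\omega_1^{L[T]} < \omega_1$ for every tree $T$ representing a universally Baire set, so that Mansfield--Solovay bounds every thin set $\p[T] \subset L[T]$ --- is provably false, not merely hard. Given any tree $T$ on $\omega \times \gamma$ and any set of ordinals $S$, apply to the ordinal coordinates of $T$ an injection $h$ whose range codes $S$: the tree $h(T)$ satisfies $\p[h(T)] = \p[T]$ in every generic extension (branches of sections correspond under $h$), so $h(T)$ witnesses universal Baireness of the same set when paired with the same complementing tree, while $L[h(T)] \supseteq L[S]$ can be arranged so that $\omega_1^{L[h(T)]} = \omega_1$. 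Hence no hypothesis about $\omega_1$ yields $\omega_1^{L[T]} < \omega_1$ for all such $T$; and even for a well-chosen $T$, Mansfield--Solovay gives only $A \subset L[T]$, which bounds $|A|$ in the $\Sigma^1_2$ case via CH in $L[x]$ but gives nothing for $L[T]$, where CH can fail. This is precisely what separates $\uB$ from ${\bfSigma}^1_2$, and it is why the paper proves no such equivalences: in ZFC it proves only the one-directional implications \ref{item:omega-1-uB-sets}, \ref{item:equi-L-R-uB-LM}, \ref{item:equi-L-R-uB-PSP} $\Rightarrow$ \ref{item:equi-uB-PSP} $\Rightarrow$ \ref{item:equi-combinatorial} (via Lemma \ref{lem:thin-equivalences}: every subset of a thin $\uB$ set is $\uB$ and thin $\uB$ sets carry $\uB$ wellorderings, plus Shelah's ZF + DC theorem), and it closes the cycle only at the level of consistency: statement \ref{item:equi-combinatorial} gives statement \ref{item:equi-v-Shelah} in $L$, and statement \ref{item:equi-v-Shelah} gives statements \ref{item:omega-1-uB-sets}--\ref{item:equi-L-R-uB-PSP} in the Levy collapse extension via the reflection Lemma \ref{lem:forcing} and Proposition \ref{prop:forward-direction}. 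Since your proposal forces only statement \ref{item:equi-combinatorial} in the extension and has no counterpart of Lemma \ref{lem:forcing}, the consistency of statements \ref{item:omega-1-uB-sets}, \ref{item:equi-L-R-uB-LM}, \ref{item:equi-L-R-uB-PSP} is never established at all.

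Two further concrete problems. First, your plan to refute statement \ref{item:equi-L-R-uB-LM} from the failure of statement \ref{item:equi-combinatorial} by manufacturing ``a genuinely universally Baire counterexample to regularity'' is impossible: every universally Baire set \emph{is} Lebesgue measurable, provably in ZFC (as the introduction of the paper recalls). Statement \ref{item:equi-L-R-uB-LM} concerns arbitrary sets of reals in $L(\mathbb{R},\uB)$; the paper refutes it from a thin uncountable $\uB$ set $A$ by observing that a $\uB$ wellordering of $A$ makes $L(\mathbb{R},\uB)$ a model of DC in which $\omega_1 \le |\mathbb{R}|$, so Shelah's theorem produces a non-measurable set \emph{inside that model}, not a non-measurable $\uB$ set. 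Relatedly, the step you defer as ``check that these are universally Baire'' is the technical heart of the paper's Lemma \ref{lem:PSP-uB-implies-combinatorial}: the complementing tree for $\{x_\beta : \beta \in \ran(f)\}$ must be built from the club sets witnessing the failure of statement \ref{item:equi-combinatorial} (the $g_\lambda$-closure device), and nothing in your outline supplies it. Second, on the positive side, your dictionary between statement \ref{item:equi-combinatorial} and collapse embeddings, and your route to a virtually Shelah cardinal in $L$, are close in spirit to the paper's Lemma \ref{lem:combinatorial-implies-v-Shelah-in-L}; but even there ``a function dominating all of these'' needs repair: what must be bounded at $\delta$ are the values $F(\delta)$ of the $j$-images $F : \omega_1 \to \omega_1$ of the given $L$-functions, and there are $\omega_1$ many such $F$ in $L$, whose pointwise supremum at a fixed $\delta$ can be $\omega_1$. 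One needs a diagonal bound together with the observation that a countable hull sees only the functions indexed inside it --- or, as the paper does, one should instead verify the reformulation of Proposition \ref{prop:downward-reformulation}, whose condition $f(\crit(j)) \le \bar{\lambda}$ refers to the given $f$ itself and is exactly what the stationary set delivers.
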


We will show that if statement \ref{item:equi-v-Shelah} holds then statements \ref{item:omega-1-uB-sets}, \ref{item:equi-L-R-uB-LM}, and \ref{item:equi-L-R-uB-PSP} hold after forcing with the Levy collapse poset to make a virtually Shelah cardinal equal to $\omega_1$.  Clearly statement \ref{item:equi-L-R-uB-PSP} implies statement \ref{item:equi-uB-PSP}, and we will show that statements \ref{item:omega-1-uB-sets} and \ref{item:equi-L-R-uB-LM} also imply statement \ref{item:equi-uB-PSP}. We will show that statement \ref{item:equi-uB-PSP} implies statement \ref{item:equi-combinatorial}. Finally, we will show that if statement \ref{item:equi-combinatorial} holds, then statement \ref{item:equi-v-Shelah} holds in $L$ as witnessed by the cardinal $\omega_1^V$.

We will also prove an equiconsistency result at a slightly higher level of consistency strength, namely that of a $\Sigma_2$-reflecting virtually Shelah cardinal. A cardinal $\kappa$ is called \emph{$\Sigma_n$-reflecting} if it is inaccessible and $V_\kappa \prec_{\Sigma_n} V$. This definition is particularly natural in the case $n = 2$ because the $\Sigma_2$ statements about a parameter $a$ are the statements that can be expressed in the form ``there is an ordinal $\lambda$ such that $V_\lambda \models \varphi[a]$'' where $\varphi$ is a formula in the language of set theory. Because the existence of a virtually Shelah cardinal above any given cardinal $\alpha$ is a $\Sigma_2$ statement about $\alpha$, if $\kappa$ is a $\Sigma_2$-reflecting virtually Shelah cardinal then $V_\kappa$ satisfies ZFC + ``there is a proper class of virtually Shelah cardinals.''

The existence of a $\Sigma_2$-reflecting cardinal is equiconsistent with the statement ${\bfDelta}^1_2 \subset \uB$ by Feng, Magidor, and Woodin \cite[Theorem 3.3]{FenMagWoo}, who showed that if $\kappa$ is $\Sigma_2$-reflecting then ${\bfDelta}^1_2 \subset \uB$ holds after the Levy collapse forcing to make $\kappa$ equal to $\omega_1$, and conversely that if ${\bfDelta}^1_2 \subset \uB$ then $\omega_1^V$ is $\Sigma_2$-reflecting in $L$. The reverse inclusion $\uB \subset {\bfDelta}^1_2$ is consistent relative to ZFC by Larson and Shelah \cite{LarSheUniversallyMeasurable}, who showed that if $V = L[x]$ for some real $x$ then there is a proper forcing extension in which every universally measurable set of reals (and hence every universally Baire set of reals) is ${\bfDelta}^1_2$.

Combining the argument of Feng, Magidor, and Woodin with parts of the proof of Theorem \ref{thm:v-Shelah}, we will show:

\begin{thm}\label{thm:delta-1-2-equals-uB}
The following statements are equiconsistent modulo ZFC.
 \begin{enumerate}
  \item\label{item:Sigma-2-reflecting-v-Shelah-2} There is a $\Sigma_2$-reflecting virtually Shelah cardinal.
  \item\label{item:uB-equals-Delta-1-2} $\uB = {\bfDelta}^1_2$.
 \end{enumerate}
\end{thm}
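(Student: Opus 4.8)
The plan is to establish both directions of the equiconsistency by splitting the pointclass identity $\uB = {\bfDelta}^1_2$ into its two inclusions and proving each from a different hypothesis: the inclusion ${\bfDelta}^1_2 \subseteq \uB$ from the $\Sigma_2$-reflecting property (via Feng, Magidor, and Woodin), and the inclusion $\uB \subseteq {\bfDelta}^1_2$ from the virtual Shelah property (via the proof of Theorem \ref{thm:v-Shelah}). The point is that a single cardinal carrying both properties produces both inclusions simultaneously in one generic extension.

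To prove that \ref{item:Sigma-2-reflecting-v-Shelah-2} implies the consistency of \ref{item:uB-equals-Delta-1-2}, I would fix a $\Sigma_2$-reflecting virtually Shelah cardinal $\kappa$ and pass to the generic extension $V[G]$ by the Levy collapse $\Col(\omega, <\kappa)$, in which $\kappa = \omega_1$. Because $\kappa$ is $\Sigma_2$-reflecting, the Feng--Magidor--Woodin argument yields ${\bfDelta}^1_2 \subseteq \uB$ in $V[G]$. Because $\kappa$ is virtually Shelah, the analysis underlying the proof of Theorem \ref{thm:v-Shelah} shows that every universally Baire set in $V[G]$ is $\omega_1$-Suslin, hence ${\bfSigma}^1_2$; since $\uB$ is closed under complements, this gives $\uB \subseteq {\bfSigma}^1_2 \cap {\bfPi}^1_2 = {\bfDelta}^1_2$. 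The two inclusions together give $\uB = {\bfDelta}^1_2$ in $V[G]$.

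For the reverse implication, from \ref{item:uB-equals-Delta-1-2} to the consistency of \ref{item:Sigma-2-reflecting-v-Shelah-2}, I would assume $\uB = {\bfDelta}^1_2$ and show that $\omega_1^V$ witnesses both properties in $L$. From ${\bfDelta}^1_2 \subseteq \uB$, the Feng--Magidor--Woodin lower-bound argument gives that $\omega_1^V$ is $\Sigma_2$-reflecting in $L$; relativizing that argument to an arbitrary real parameter $x$ (using ${\bfDelta}^1_2(x) \subseteq \uB$) shows that $\omega_1^V$ is inaccessible in $L[x]$ for every real $x$, i.e.\ that $\omega_1$ is inaccessible by reals. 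By the Mansfield--Solovay theorem this implies that every ${\bfSigma}^1_2$, and hence every ${\bfDelta}^1_2$, set has the perfect set property; since $\uB = {\bfDelta}^1_2$, this is precisely statement \ref{item:equi-uB-PSP} of Theorem \ref{thm:v-Shelah}. Invoking the implications \ref{item:equi-uB-PSP} $\Rightarrow$ \ref{item:equi-combinatorial} $\Rightarrow$ \ref{item:equi-v-Shelah} proved there, $\omega_1^V$ is virtually Shelah in $L$. As the same cardinal $\omega_1^V$ is simultaneously $\Sigma_2$-reflecting and virtually Shelah in $L$, the model $L$ witnesses the consistency of \ref{item:Sigma-2-reflecting-v-Shelah-2}.

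The step I expect to be the main obstacle is isolating, from the proof of Theorem \ref{thm:v-Shelah}, the precise statement that the virtual Shelah property bounds the Suslin representations of universally Baire sets at $\omega_1$ --- that is, the inclusion $\uB \subseteq {\bfSigma}^1_2$ --- and checking that this argument is genuinely independent of, and compatible with, the Feng--Magidor--Woodin argument for ${\bfDelta}^1_2 \subseteq \uB$, so that both run in the same collapse of a cardinal that is at once $\Sigma_2$-reflecting and virtually Shelah. In the reverse direction the analogous delicate point is the relativization of the Feng--Magidor--Woodin lower bound to all real parameters, which is what upgrades ``$\omega_1^L$ is countable'' to ``$\omega_1$ is inaccessible by reals'' and thereby supplies the perfect set property needed to enter the cycle of implications of Theorem \ref{thm:v-Shelah}.
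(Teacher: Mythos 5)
Your reverse direction is essentially the paper's own argument and is fine: from ${\bfDelta}^1_2 \subseteq \uB$ you get that $\omega_1^V$ is $\Sigma_2$-reflecting in $L$ by Feng--Magidor--Woodin, and the perfect set property for ${\bfSigma}^1_2$ sets together with $\uB \subseteq {\bfDelta}^1_2$ gives statement \ref{item:equi-uB-PSP} of Theorem \ref{thm:v-Shelah}, whence Lemmas \ref{lem:PSP-uB-implies-combinatorial} and \ref{lem:combinatorial-implies-v-Shelah-in-L} make the same cardinal $\omega_1^V$ virtually Shelah in $L$; your detour through inaccessibility-by-reals and Mansfield--Solovay is just an unpacking of the paper's direct citation of \cite[Theorem 2.4]{FenMagWoo}.

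The forward direction, however, has a genuine gap, located exactly at the step you flagged as the main obstacle. First, the inference ``$\omega_1$-Suslin, hence ${\bfSigma}^1_2$'' is false: the Levy collapse of an inaccessible forces CH, so in $V[G]$ \emph{every} set of reals is $\omega_1$-Suslin (a union of $\omega_1$ singletons), and $\omega_1$-Suslinness carries no definability information whatsoever. Second, and more fundamentally, no repair of this step can work as long as you force over $V$ itself: the conclusion $\uB \subseteq {\bfDelta}^1_2$ in $V[G]$ is consistently false. If $V$ contains, above $\kappa$, large cardinals sufficient for ${\bfSigma}^1_2 \subseteq \uB$ (closure under sharps for sets suffices, by Feng--Magidor--Woodin), then these hypotheses survive the small forcing $\Col(\omega,\mathord{<}\kappa)$, so in $V[G]$ one has ${\bfSigma}^1_2 \subseteq \uB$ and hence $\uB \not\subseteq {\bfDelta}^1_2$, since ${\bfSigma}^1_2 \ne {\bfDelta}^1_2$. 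This is why the paper does \emph{not} force over $V$: it first passes to $L$, noting that both hypotheses are downward absolute ($\Sigma_2$-reflection easily, virtual Shelahness by Proposition \ref{lem:absolute-to-L}), and then collapses $\kappa$ over $L$. The inclusion $\uB \subseteq {\bfDelta}^1_2$ in $L[G]$ is Proposition \ref{prop:uB-subset-Delta-1-2}, whose proof is irreducibly $L$-specific: by Lemma \ref{lem:forcing}, every universally Baire $A$ in $L[G]$ is the canonical extension of a $\kappa$-universally Baire $A_0$ in some intermediate model $L[G \restriction \alpha] = L[z]$ with $z$ a real, and then the equivalence ``$x \in A$ iff there is a tree $T' \in L[z]$ with $x \in \p[T']$ and $\p[T'] \cap B_0 = \emptyset$'' (where $B_0 = \mathbb{R}^{L[z]} \setminus A_0$ is coded by a single real of $L[G]$) is $\Sigma_1$ over $\HC$, giving $A \in {\bfSigma}^1_2$, and symmetrically for the complement. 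The point is that the predicate ``$T' \in L[z]$'' is $\Sigma_1(z)$; the corresponding predicate ``$T' \in V$'' for a general ground model is not projectively definable in $V[G]$, which is precisely why your version over $V$ fails and the passage to $L$ is necessary rather than cosmetic.
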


The remaining sections of this paper are outlined as follows. In Section \ref{sec:consequences-of-v-shelah} we will prove some consequences of Definition \ref{defn:kappa-is-crit-j} that will be needed for our main results, including a reformulation of the definition in which $\kappa$ is $j(\crit(j))$ instead of $\crit(j)$ (Proposition \ref{prop:downward-reformulation}.) In Section \ref{section:relation-to-other-LC} we will prove some relations between virtually Shelah cardinals and other large cardinals that will not be needed for our main results.  In Section \ref{sec:thin-uB-sets} we will review some properties of universally Baire sets and establish some equivalent conditions for a universally Baire set to be thin, meaning to contain no perfect subset. In Section \ref{section:proof-of-thm-v-Shelah} we will prove Theorem \ref{thm:v-Shelah}. In Section \ref{section:proof-of-thm-delta-1-2-equals-uB} we will prove Theorem \ref{thm:delta-1-2-equals-uB}.

\section{Consequences of the virtual Shelah property}\label{sec:consequences-of-v-shelah}

The following result is typical of virtual large cardinals (see Schindler \cite[Lemma 1.4]{SchProperForcingRemarkableII}.)

\begin{prop}\label{prop:ineffable}
 Every virtually Shelah cardinal is ineffable.
\end{prop}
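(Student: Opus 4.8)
The plan is to adapt the classical elementary-embedding proof that measurable (or remarkable) cardinals are ineffable to the virtual, set-sized setting of Definition \ref{defn:kappa-is-crit-j}. Recall that $\kappa$ is \emph{ineffable} if for every sequence $\langle A_\alpha : \alpha < \kappa \rangle$ with $A_\alpha \subseteq \alpha$ there is a set $A \subseteq \kappa$ for which $S_A := \{\alpha < \kappa : A \cap \alpha = A_\alpha\}$ is stationary in $\kappa$. So I fix such a sequence $\vec A = \langle A_\alpha : \alpha < \kappa\rangle$ and must produce the set $A$ and verify stationarity of $S_A$. First I would record that a virtually Shelah cardinal $\kappa$ is regular, indeed inaccessible, by the usual arguments: applying the definition to any function and pushing a cofinal map, or a surjection from some $V_{\mu+1}$ with $\mu<\kappa$, through the resulting $j$ contradicts $\crit(j) = \kappa$. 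Thus $\kappa$ is an uncountable limit ordinal and ``stationary in $\kappa$'' carries its usual meaning.

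Next, to obtain an embedding whose domain actually contains $\vec A$, I would apply Definition \ref{defn:kappa-is-crit-j} to the function $f(\alpha) = \alpha + \omega$. This yields an ordinal $\lambda > \kappa$, a transitive set $M \supseteq V_\lambda$, and a generic elementary embedding $j : V_\lambda \to M$ with $\crit(j) = \kappa$ and $j(f)(\kappa) \le \lambda$. By elementarity $j(f)(\kappa) = \kappa + \omega$, so $\lambda \ge \kappa + \omega$; hence $\vec A \in V_\lambda$, as is every subset of $\kappa$. Set $A := j(\vec A)(\kappa)$. Applying elementarity to ``$\forall \alpha\ \vec A(\alpha) \subseteq \alpha$'' gives $A \subseteq \kappa$, and since $M$ is a transitive set lying in $V$ with $A \in M$, the set $A$ itself lies in $V$. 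This is the candidate witness. The role of $f$ is solely to force $\lambda$ large enough, and this is the only place where the virtual Shelah property (rather than mere generic measurability) is invoked, albeit trivially.

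It remains to show $S_A$ is stationary in $V$, and here the one genuine subtlety is that $j$ lives only in a generic extension $V[G]$ by $\Col(\omega,V_\lambda)$, where $\kappa$ is countable and stationarity in $\kappa$ is vacuous; so I cannot reason about stationarity inside $V[G]$. Instead I would fix a hypothetical counterexample, namely a club $C \subseteq \kappa$ \emph{in $V$} with $C \cap S_A = \emptyset$, and derive an outright contradiction in $V[G]$, which refutes the existence of such a $C$ back in $V$. Working in $V[G]$ with $C, A, S_A, \vec A \in V_\lambda$: since $\crit(j) = \kappa$ we have $j(C) \cap \kappa = C$ and $j(A)\cap\kappa = A$; closure of $j(C)$ in $j(\kappa)$ together with $\sup C = \kappa < j(\kappa)$ gives $\kappa \in j(C)$, where closure is evaluated correctly because $M$ is transitive. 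On the other hand, by elementarity $j(S_A) = \{\alpha < j(\kappa) : j(A) \cap \alpha = j(\vec A)(\alpha)\}$ as computed in the transitive $M$, so $\kappa \in j(S_A)$ precisely because $j(A) \cap \kappa = A = j(\vec A)(\kappa)$. But elementarity also gives $j(C) \cap j(S_A) = j(C \cap S_A) = \emptyset$, contradicting $\kappa \in j(C) \cap j(S_A)$.

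I expect the main obstacle to be exactly this descent: organizing the argument so that stationarity is only ever asserted in $V$ while the embedding is used purely to refute a fixed ground-model club, rather than trying to transfer a stationarity statement across the collapse. The remaining work is routine bookkeeping — checking that $\lambda$ is large enough, that the $\crit(j) = \kappa$ identities $j(C)\cap\kappa = C$ and $j(A)\cap\kappa = A$ hold, and that the transitive target $M$ evaluates the relevant absolute facts (club-ness of $j(C)$ in $j(\kappa)$ and the defining formula of $S_A$) correctly.
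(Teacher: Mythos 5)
Your proof is correct and follows essentially the same route as the paper's: define $A = j(\vec{A})(\kappa)$ and show $\kappa \in j(C) \cap j(S_A) = j(C \cap S_A)$ for any ground-model club $C$, which by elementarity of $j$ forces $C \cap S_A \ne \emptyset$ in $V$. Your extra care in applying the definition to $f(\alpha) = \alpha + \omega$ so that $\vec{A} \in V_\lambda$ (a point the paper glosses over by taking an arbitrary embedding), and your contrapositive phrasing of the club argument inside $V[G]$, are minor refinements that change nothing essential.
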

\begin{proof}
 Let $\kappa$ be a virtually Shelah cardinal. Then there is an ordinal $\lambda > \kappa$, a transitive set $M$ such that $V_\lambda \subset M$, and a generic elementary embedding $j : V_\lambda \to M$ with $\crit(j) = \kappa$.  (Here we will not need $j(f)(\kappa) \le \lambda$ for any particular function $f$.)
 
 Let $\vec{A}$ be a $\kappa$-sequence of sets such that $\vec{A}(\alpha) \subset \alpha$ for every ordinal $\alpha < \kappa$. Then we may define a subset $A \subset \kappa$ by $A = j(\vec{A})(\kappa)$. We will show that the set 
 \[S = \{\alpha < \kappa : A \cap \alpha = \vec{A}(\alpha)\}\]
 is stationary. Letting $C$ be a club set in $\kappa$ we have $\kappa \in j(C)$, and because 
 \[j(A) \cap \kappa = A = j(\vec{A})(\kappa)\]
 we have $\kappa \in j(S)$ also, so it follows that
 \[\kappa \in j(C) \cap j(S) = j(C \cap S)\]
 and by the elementarity of $j$ we have $C \cap S \ne \emptyset$.
\end{proof}

A better lower bound for the consistency strength of virtually Shelah cardinals will be given in Section \ref{section:relation-to-other-LC} along with an upper bound. For our main results we will only need the fact that every virtually Shelah cardinal is an inaccessible limit of inaccessible cardinals, which is a consequence of Proposition \ref{prop:ineffable}.

The following lemma shows (among other things) that the domain of a generic elementary embedding witnessing the virtual Shelah property may be taken to be an inaccessible rank initial segment of $V$, which implies that the domain and codomain both satisfy ZFC.
 
\begin{lem}\label{lem:zfc-models}
 Let $\kappa$ be a virtually Shelah cardinal and let $f : \kappa \to \kappa$. Then there is an inaccessible cardinal $\lambda > \kappa$, a transitive model $M$ of ZFC with $V_\lambda \subset M$, and a generic elementary embedding $j : V_\lambda \to M$ with $\crit(j) = \kappa$ and $j(f)(\kappa) < \lambda < j(\kappa)$.
\end{lem}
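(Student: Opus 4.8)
The plan is to take the raw witness supplied by Definition \ref{defn:kappa-is-crit-j} and refine it by applying the definition not to $f$ itself but to a faster-growing function $g$ that encodes inaccessibility, and then to cut the resulting embedding down to an inaccessible rank initial segment. Throughout I use that $\kappa$ is an inaccessible limit of inaccessible cardinals, which follows from Proposition \ref{prop:ineffable} and the remark after it.

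First I would define $g : \kappa \to \kappa$ by letting $g(\alpha)$ be the \emph{second} inaccessible cardinal greater than $\max(\alpha, f(\alpha))$; since $\kappa$ is a limit of inaccessibles and $\max(\alpha,f(\alpha)) < \kappa$, this is well defined with $g(\alpha) < \kappa$. Applying Definition \ref{defn:kappa-is-crit-j} to $g$ yields an ordinal $\lambda_0 > \kappa$, a transitive set $M_0 \supseteq V_{\lambda_0}$, and a generic elementary embedding $j_0 : V_{\lambda_0} \to M_0$ with $\crit(j_0) = \kappa$ and $j_0(g)(\kappa) \le \lambda_0$; note $f, g \in V_{\lambda_0}$. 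Inaccessibility below $\lambda_0$ is absolute between $V$, $V_{\lambda_0}$ and $M_0$ (because $V_{\lambda_0} \subseteq M_0$ are transitive, so these models compute $V_{\mu+1}$ correctly for $\mu < \lambda_0$), so by elementarity $j_0(g)(\kappa)$ is, in $M_0$, the second inaccessible above $\max(\kappa, j_0(f)(\kappa))$. I then let $\lambda$ be the \emph{first} inaccessible cardinal of $M_0$ above $\max(\kappa, j_0(f)(\kappa))$. The point of coding the second inaccessible into $g$ is exactly that it forces $\max(\kappa, j_0(f)(\kappa)) < \lambda < j_0(g)(\kappa) \le \lambda_0$, so that $\lambda < \lambda_0$ \emph{strictly}; since $\lambda < \lambda_0$, the absoluteness of inaccessibility makes $\lambda$ genuinely inaccessible in $V$, and moreover $\lambda > \kappa$ and $\lambda > j_0(f)(\kappa)$.

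Finally I would obtain the desired embedding by restriction. Because $\lambda < \lambda_0$, the ordinal $\lambda$ lies in $\dom(j_0)$, and one checks that $j := j_0 \restriction V_\lambda$ is an elementary embedding of $V_\lambda$ into $M := (V_{j_0(\lambda)})^{M_0}$: for parameters in $V_\lambda$ this is just the absoluteness of the satisfaction relation for the set-sized structures $V_\lambda$ and $j_0(V_\lambda) = (V_{j_0(\lambda)})^{M_0}$, applied inside $V_{\lambda_0}$ and $M_0$. Since $\lambda$ is inaccessible in $V_{\lambda_0}$, elementarity gives that $j_0(\lambda)$ is inaccessible in $M_0$, whence $M = (V_{j_0(\lambda)})^{M_0}$ is a model of ZFC; moreover $M \in V$ and $V_\lambda = (V_\lambda)^{M_0} \subseteq (V_{j_0(\lambda)})^{M_0} = M$. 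The critical point is unchanged, $\crit(j) = \kappa$, because $\kappa < \lambda$ and $j(\kappa) = j_0(\kappa)$. Likewise $j(f) = j_0(f)$, so $j(f)(\kappa) = j_0(f)(\kappa) < \lambda$ by the choice of $\lambda$, while $\lambda < j_0(g)(\kappa) < j_0(\kappa) = j(\kappa)$ because $j_0(g)$ maps $j_0(\kappa)$ into itself in $M_0$. This gives $j(f)(\kappa) < \lambda < j(\kappa)$, as required.

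I expect the main obstacle to be arranging the two \emph{strict} inequalities simultaneously with genuine inaccessibility of $\lambda$ in $V$: the definition only hands us $j_0(g)(\kappa) \le \lambda_0$ with no control over $\lambda_0$, and a value equal to $\lambda_0$ would be useless, since $\lambda_0 \notin \dom(j_0)$ and inaccessibility at the very top of $M_0$ need not reflect to $V$. The device of coding the \emph{second} inaccessible into $g$ and then extracting the \emph{first} one is what creates the necessary gap $\lambda < \lambda_0$; after that, the restriction argument and the verification that $M$ models ZFC are routine.
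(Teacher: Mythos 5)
Your proof is correct and is essentially the paper's own argument: the paper takes $g(\alpha)$ to be the least inaccessible cardinal above $\max(f(\alpha),\alpha)$ and applies Definition \ref{defn:kappa-is-crit-j} to $g+1$, then sets $\lambda = j(g)(\kappa)$, so your ``code the second inaccessible, then extract the first'' is just a cosmetic variant of the same device for forcing the strict gap $\lambda < \lambda_0$, and the restriction step $j = j_0 \restriction V_\lambda$ with target $(V_{j_0(\lambda)})^{M_0}$ is identical. One small repair: justify that $M$ models ZFC directly from the elementarity of $j : V_\lambda \to M$ together with $V_\lambda \models \mathrm{ZFC}$ (as the paper does), rather than from ``$j_0(\lambda)$ is inaccessible in $M_0$,'' since $M_0$ is merely a transitive set and is not known to satisfy enough of ZF for inaccessibility inside it to imply that its corresponding rank initial segment models ZFC.
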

\begin{proof}
 Because $\kappa$ is a limit of inaccessible cardinals we may define a function $g : \kappa \to \kappa$ such that $g(\alpha)$ is the least inaccessible cardinal greater than $\max(f(\alpha), \alpha)$ for all $\alpha < \kappa$. Because $\kappa$ is virtually Shelah with respect to the function $g+1$ there is an ordinal $\beta > \kappa$, a transitive set $N$ with $V_\beta \subset N$, and a generic elementary embedding
 \[j: V_\beta \to N \text{ with } \crit(j) = \kappa \text{ and } j(g)(\kappa) < \beta.\]
 By the definition of $g$ from $f$ and the elementarity of $j$ it follows that $j(g)(\kappa)$ is the least inaccessible cardinal in $N$ greater than $\max(j(f)(\kappa), \kappa)$. Because $j(g)$ is a function from $j(\kappa)$ to $j(\kappa)$ we have $j(g)(\kappa) < j(\kappa)$. Letting $\lambda = j(g)(\kappa)$ we therefore have $\lambda > \kappa$ and
 \[j(f)(\kappa) < \lambda < j(\kappa).\]
 Because $\lambda < \beta$ and $V_\beta \subset N$, the inaccessibility of $\lambda$ is absolute from $N$ to $V$. Define $j_1 = j \restriction V_\lambda$ and $M$ = $j(V_\lambda) = V_{j(\lambda)}^N$. Then $j_1 : V_\lambda \to M$ is a generic elementary embedding with $\crit(j_1) = \kappa$ and $j_1(f)(\kappa) < \lambda < j_1(\kappa)$ as desired. Because $V_\lambda$ satisfies ZFC it follows by the elementarity of $j_1$ that $M$ satisfies ZFC.
\end{proof}

It follows from Lemma \ref{lem:zfc-models} that every virtually Shelah cardinal has an inaccessible cardinal above it. Because inaccessibility is preserved by small forcing, combining this fact with the proof of Theorem \ref{thm:v-Shelah} (as outlined following the statement of the theorem) yields the following curious consequence.

\begin{prop}\label{prop:curious}
The following statements are equiconsistent modulo ZFC.
\begin{enumerate}
 \item\label{item:psp-uB} Every universally Baire set of reals has the perfect set property.
 \item\label{item:psp-uB-and-inacc} Every universally Baire set of reals has the perfect set property and there is an inaccessible cardinal.
\end{enumerate}
\end{prop}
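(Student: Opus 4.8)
The plan is to observe that one direction is trivial and to derive the other from Theorem \ref{thm:v-Shelah} together with Lemma \ref{lem:zfc-models}. The implication from statement \ref{item:psp-uB-and-inacc} to statement \ref{item:psp-uB} holds outright, since statement \ref{item:psp-uB-and-inacc} is a conjunction whose first conjunct is statement \ref{item:psp-uB}; hence the consistency of ZFC plus statement \ref{item:psp-uB-and-inacc} trivially implies the consistency of ZFC plus statement \ref{item:psp-uB}.

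For the reverse consistency implication, I would assume the consistency of ZFC plus statement \ref{item:psp-uB}. By the equivalence of statements \ref{item:equi-v-Shelah} and \ref{item:equi-uB-PSP} in Theorem \ref{thm:v-Shelah}, this is equivalent to the consistency of ZFC together with the existence of a virtually Shelah cardinal. So I would work in a model $V$ of ZFC containing a virtually Shelah cardinal $\kappa$. Applying Lemma \ref{lem:zfc-models} to any function $f : \kappa \to \kappa$ produces, in particular, an inaccessible cardinal $\lambda > \kappa$.

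Next I would pass to the Levy collapse extension $V[G]$, where $G$ is generic over $V$ for $\Col(\omega, {<}\kappa)$, exactly as in the proof of the implication from statement \ref{item:equi-v-Shelah} to statement \ref{item:equi-uB-PSP} in Theorem \ref{thm:v-Shelah}. That argument already yields that every universally Baire set of reals has the perfect set property in $V[G]$, so statement \ref{item:psp-uB} holds in $V[G]$. It then remains only to check that an inaccessible cardinal survives the collapse. Since $|\Col(\omega, {<}\kappa)| = \kappa < \lambda$, the poset is small relative to $\lambda$, and because inaccessibility is preserved by small forcing the cardinal $\lambda$ remains inaccessible in $V[G]$. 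Hence $V[G]$ satisfies statement \ref{item:psp-uB-and-inacc}, and the consistency of ZFC plus statement \ref{item:psp-uB-and-inacc} follows.

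Once Theorem \ref{thm:v-Shelah} and Lemma \ref{lem:zfc-models} are available the argument is short; the only point needing attention is that the inaccessible cardinal be chosen strictly above $\kappa$, so that the collapse is small relative to it and the standard preservation of inaccessibility under small forcing applies. This is precisely what Lemma \ref{lem:zfc-models} guarantees. The apparent curiosity---that positing an inaccessible cardinal in addition to the perfect set property costs no further consistency strength---is thereby explained: a virtually Shelah cardinal already carries inaccessibles above it, and these persist through the very collapse used to force the perfect set property.
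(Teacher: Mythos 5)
Your proof is correct and is essentially the paper's own argument: the paper likewise derives Proposition \ref{prop:curious} by noting that Lemma \ref{lem:zfc-models} places an inaccessible cardinal above every virtually Shelah cardinal, that inaccessibility is preserved by the small forcing $\Col(\omega,\mathord{<}\kappa)$, and that the equiconsistency chain of Theorem \ref{thm:v-Shelah} supplies both directions. Your added remark about choosing the inaccessible strictly above $\kappa$ is exactly the point the paper relies on as well.
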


Note that we may have $\uB \subsetneq \uB^{V_\lambda}$ for an inaccessible cardinal $\lambda$, so 
the statement ``every universally Baire set of reals has the perfect set property'' may fail to be absolute to $V_\lambda$.

Like other virtual large cardinal properties, the virtual Shelah property is downward absolute to $L$:

\begin{prop}\label{lem:absolute-to-L}
 Every virtually Shelah cardinal is virtually Shelah in $L$.
\end{prop}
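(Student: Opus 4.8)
The plan is to verify Definition \ref{defn:kappa-is-crit-j} in $L$ for an arbitrary $f \in L$ with $f : \kappa \to \kappa$, by manufacturing the witnessing data inside $V$ and then transferring the mere \emph{existence} of the generic embedding from $V$ down to $L$ by an absoluteness argument. First I would fix such an $f$ and apply Lemma \ref{lem:zfc-models} in $V$: this yields an inaccessible cardinal $\lambda > \kappa$, a transitive model $M \models \mathrm{ZFC}$ with $V_\lambda \subset M$, and a generic elementary embedding $j : V_\lambda \to M$ with $\crit(j) = \kappa$ and $j(f)(\kappa) < \lambda$. By the conventions preceding Definition \ref{defn:kappa-is-crit-j} we may take $j$ to live in the extension $V[G]$ by $\Col(\omega, V_\lambda)$, so that $V_\lambda$, and hence every ordinal below $\lambda$, is countable in $V[G]$.

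Next I would restrict $j$ to $L$. Since $\lambda$ is inaccessible in $V$ it is inaccessible in $L$, so $(V_\lambda)^L = L_\lambda \models \mathrm{ZFC}$; likewise, since $M \models \mathrm{ZFC}$ we have $L^M = L_\eta$ with $\eta = \Ord \cap M > \lambda$. Because $L$ is definable, the restriction $k := j \restriction L_\lambda$ is an elementary embedding $k : L_\lambda \to L_\eta$ with $\crit(k) = \kappa$, and since $f \in L \cap V_\lambda = L_\lambda$ we have $k(f) = j(f)$ and hence $k(f)(\kappa) = j(f)(\kappa) < \lambda$. Both $L_\lambda$ and $L_\eta$ are transitive sets of $L$ with $L_\lambda \subset L_\eta$. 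Thus, taking $M' = L_\eta$, the embedding $k$ witnesses in $V[G]$ that the conclusion of Definition \ref{defn:kappa-is-crit-j} holds for $f$ with domain $(V_\lambda)^L$; the only remaining defect is that $k$ is generic over $V$ rather than over $L$.

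To remove that defect I would encode the two side conditions into the structures and invoke absoluteness. Expand $L_\lambda$ and $L_\eta$ to structures $\mathfrak{A}$ and $\mathfrak{B}$ in a finite signature by adjoining constants interpreted as $\kappa, f$ in $\mathfrak{A}$ and as $j(\kappa), j(f)$ in $\mathfrak{B}$; then any elementary embedding $k' : \mathfrak{A} \to \mathfrak{B}$ sends $\kappa \mapsto j(\kappa)$ and $f \mapsto j(f)$, so that $\crit(k') \le \kappa$ and, once $\crit(k') = \kappa$, automatically $k'(f)(\kappa) = j(f)(\kappa) < \lambda$. To pin the critical point to be exactly $\kappa$ I would use that after collapsing the domain $L_\lambda$ to be countable the ordinal $\kappa$ is countable, so the requirement that $k'$ fix every $\xi < \kappa$ becomes a countable list of constraints that can be folded into the length-$\omega$ approximation game (equivalently, into the tree of finite partial elementary maps) whose branches are the desired embeddings. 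All of $\mathfrak{A}, \mathfrak{B}, \kappa, f$ and these constraints lie in $L$, so the assertion ``there is a generic elementary embedding $\mathfrak{A} \to \mathfrak{B}$ fixing every ordinal below $\kappa$'' is, by the absoluteness of generic elementary embeddability (Bagaria, Gitman, and Schindler \cite[Lemma 2.6]{BagGitSchGenericVopenka}), absolute between $L$ and $V$; it holds in $V$ by the previous paragraph, hence in $L$. Unwinding, $L$ contains a generic elementary embedding $L_\lambda \to L_\eta$ with critical point $\kappa$ sending $f$ to $j(f)$, so with $k'(f)(\kappa) < \lambda$, and since $f \in L$ was arbitrary this verifies Definition \ref{defn:kappa-is-crit-j} in $L$.

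The main obstacle is precisely this last step: plain absoluteness of elementary embeddability preserves neither the critical point nor the action on a designated function, so a transferred embedding might collapse more ordinals or act differently on $f$. Both difficulties are resolved by packaging the relevant data as first-order information (the two constants) together with the ``fix $\xi$ for $\xi < \kappa$'' constraints, which become expressible only after the domain is made countable; the conceptual point is that collapsing the domain converts the a priori proper-class-many conditions defining the critical point into a countable, and hence absolute, requirement. A secondary point to verify is that the tree-of-finite-partial-maps reformulation underlying \cite[Lemma 2.6]{BagGitSchGenericVopenka} genuinely needs the domain countable, which is why the argument must pass through the Levy collapse of $L_\lambda$ in both $V$ and $L$.
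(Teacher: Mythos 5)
Your proof is correct and follows essentially the same route as the paper's: apply Lemma \ref{lem:zfc-models}, restrict $j$ to $V_\lambda^L = L_\lambda$ (using inaccessibility of $\lambda$ in $L$) to get an elementary embedding $L_\lambda \to L^M$ preserving $\kappa$ and $f$, and then transfer its existence to a generic extension of $L$ by the absoluteness of elementary embeddability of countable structures. The only difference is bookkeeping: the paper invokes \cite[Lemma 2.6]{BagGitSchGenericVopenka} in the form that already preserves the critical point and the images of finitely many designated points, whereas you re-derive that refinement by hand (adjoining constants for $\kappa, f, j(\kappa), j(f)$ and folding the countably many ``fix $\xi < \kappa$'' constraints into the tree of finite partial embeddings), which is precisely how that strengthening is proved.
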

\begin{proof}
 Let $\kappa$ be a virtually Shelah cardinal and let $f : \kappa \to \kappa$ be a function in $L$. Then by Lemma \ref{lem:zfc-models} there is an inaccessible cardinal $\lambda > \kappa$, a transitive model $M$ of ZFC with $V_\lambda \subset M$, and a generic elementary embedding $j : V_\lambda \to M$ with $\crit(j) = \kappa$ and $j(f)(\kappa) < \lambda < j(\kappa)$. Note that $L^{V_\lambda} = L_\lambda$ and $L^M = L_\theta$ where $\theta = \Ord^M$. Moreover, because $\lambda$ is inaccessible we have $L_\lambda = V_\lambda^L$. For the elementary embedding $j_1  = j \restriction V_\lambda^L$ we have
 \[j_1 : V_\lambda^L \to L_\theta \text{ and } \crit(j_1) = \kappa \text{ and } j_1(f) = j(f).\]
 Let $G \subset \Col(\omega,V_\lambda^L)$ be a $V$-generic filter. Then by the absoluteness of elementary embeddability of countable structures there is an elementary embedding $j_2 \in L[G]$ such that
  \[j_2 : V_\lambda^L \to L_\theta \text{ and } \crit(j_2) = \kappa \text{ and } j_2(f) = j(f).\]
 We have $V_\lambda^L = L_\lambda \subset L_\theta$ and  $j_2(f)(\kappa) = j(f)(\kappa) < \lambda$, so $j_2$ witnesses the virtual Shelah property for $\kappa$ in $L$ with respect to $f$.
\end{proof} 

The virtual Shelah property has a reformulation in which $\kappa$ is the image of the critical point, as in the characterizations of supercompactness and remarkability by Magidor \cite[Theorem 1]{MagCombinatorialCharacterization} and Schindler \cite[Lemma 1.6]{SchProperForcingRemarkableII} respectively:

\begin{prop}\label{prop:downward-reformulation}
 For every cardinal $\kappa$ the following statements are equivalent:
 \begin{enumerate}
  \item\label{item:kappa-is-crit-j} $\kappa$ is virtually Shelah.
  \item\label{item:kappa-is-image-of-crit-j} For every function $f : \kappa \to \kappa$ there are ordinals $\bar{\lambda}$ and $\lambda$ and a generic elementary embedding $j :V_{\bar{\lambda}} \to V_\lambda$ with $j(\crit(j)) = \kappa$ and $f \in \ran(j)$ and $f(\crit(j)) \le \bar{\lambda}$.\footnote{This statement implies $\lambda > \kappa$, and by restricting $j$ if necessary we may assume $\bar{\lambda} < \kappa$ if desired.}
 \end{enumerate}
\end{prop}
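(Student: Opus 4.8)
The plan is to prove the two implications \ref{item:kappa-is-crit-j}$\Rightarrow$\ref{item:kappa-is-image-of-crit-j} and \ref{item:kappa-is-image-of-crit-j}$\Rightarrow$\ref{item:kappa-is-crit-j} by a single device: pass to a generic extension in which the relevant embedding lives, restrict it to a rank-initial segment, and transport the resulting configuration to $V$ using elementarity together with the absoluteness of generic elementary embeddability of countable structures. Throughout I would control critical points by the standard trick of expanding the structures under consideration by a constant for each element of a rank-initial segment $V_\delta$; since $V_\delta$ becomes countable once the domain is collapsed, embeddability of the expanded structure is still absolute, and any witnessing embedding fixes $V_\delta$ pointwise and hence has critical point at least $\delta$, so that prescribing the image of $\delta$ to lie above $\delta$ forces the critical point to equal $\delta$ exactly.

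For \ref{item:kappa-is-crit-j}$\Rightarrow$\ref{item:kappa-is-image-of-crit-j}, given $f$ I would first inspect the proof of Lemma \ref{lem:zfc-models} to extract not merely the restricted embedding but the underlying generic elementary embedding $j : V_\beta \to N$ with $\crit(j)=\kappa$, where $N$ is transitive, $\lambda := j(g)(\kappa)$ is inaccessible with $\kappa < \lambda < j(\kappa) < \beta$, and $M := V_{j(\lambda)}^N$ satisfies ZFC. Working in the generic extension, $j \restriction V_\lambda : V_\lambda \to M$ is a generic elementary embedding with critical point $\kappa$, with $\kappa \mapsto j(\kappa) > \lambda$, with $j(f) \in \ran(j \restriction V_\lambda)$ and $j(f)(\kappa) < \lambda$. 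The essential point is that $V_\lambda$ and $M = V_{j(\lambda)}^N$ both lie in $N$; so by the absoluteness of generic embeddability applied inside $N$ (marking $V_\kappa$ as fixed, $\kappa \mapsto j(\kappa)$, and $f \mapsto j(f)$ to pin the critical point), the model $N$ satisfies ``there are $\bar\lambda$ and $\gamma$ and a generic elementary embedding $e : V_{\bar\lambda} \to V_\gamma$ with $e(\crit(e)) = j(\kappa)$, $j(f) \in \ran(e)$, and $j(f)(\crit(e)) \le \bar\lambda$,'' witnessed by $\bar\lambda = \lambda$, $\gamma = j(\lambda)$, and $e = j \restriction V_\lambda$. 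Pulling this existential assertion back through $j : V_\beta \to N$ and noting it is absolute between $V_\beta$ and $V$ yields statement \ref{item:kappa-is-image-of-crit-j}; the footnote's refinements follow by restricting the resulting embedding.

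For \ref{item:kappa-is-image-of-crit-j}$\Rightarrow$\ref{item:kappa-is-crit-j}, given $f$ I would apply \ref{item:kappa-is-image-of-crit-j} to obtain a generic elementary embedding $h : V_{\bar\lambda} \to V_{\lambda'}$ with $\bar\kappa := \crit(h)$, $h(\bar\kappa) = \kappa$, $\bar f := h^{-1}(f) : \bar\kappa \to \bar\kappa$, and $f(\bar\kappa) \le \bar\lambda$, arranged so that $\bar\kappa < \bar\lambda < \kappa < \lambda'$ with $\lambda'$ inaccessible. Since $h$ fixes $V_{\bar\kappa}$ pointwise and sends $\bar\kappa$ to $\kappa$, the embedding $h$ already exhibits $\bar\kappa$ as enjoying the virtual Shelah property for $\bar f$ witnessed at the top level of $V_{\bar\lambda}$, and the task is to transport this upward across $h(\bar\kappa)=\kappa$. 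I would do so by reflecting, through the elementarity of $h$, a statement formulated entirely in the parameters $\kappa$ and $f$ that lie in the range of $h$, again applying the absoluteness of generic embeddability with a rank-initial segment named by constants, so as to extract in $V$ a generic elementary $j : V_\mu \to M$ with $\crit(j)=\kappa$, $V_\mu \subseteq M$ transitive, and $j(f)(\kappa) \le \mu$; a final appeal to absoluteness from $V_{\lambda'}$ to $V$ then gives \ref{item:kappa-is-crit-j}.

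The hard part, in both directions, will be exactly the control of critical points under these transports. The absoluteness of generic embeddability only supplies, a priori, an embedding respecting the constants imposed, and without care the witnessing embedding may collapse its critical point strictly below the intended value, so that ``$e(\crit(e)) = \kappa$'' or ``$\crit(j) = \kappa$'' could silently fail; the obstruction is sharpened in the second direction by the fact that $\bar\kappa$ itself is not in the range of $h$. The remedy is the bookkeeping described above—naming a full rank-initial segment ($V_\kappa$, respectively $V_{\bar\kappa}$) by constants and prescribing the image of the critical value to lie strictly above it—so that the transported embedding fixes the segment and moves the distinguished ordinal, pinning the critical point. Verifying that this can be arranged simultaneously with the preservation of the bound $f(\crit)\le\bar\lambda$ through each transport, so as to deliver $j(f)(\kappa)\le\mu$, is where I expect the genuine work of the argument to lie.
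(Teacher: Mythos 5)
Your overall skeleton (restrict the embedding, apply absoluteness of elementary embeddability for countable structures, then transport the configuration through the elementarity of the original embedding) is indeed the paper's strategy, but both directions of your proposal have genuine gaps, and they are not where you locate them. The critical-point bookkeeping you single out as ``the hard part'' is in fact routine: the absoluteness lemma the paper cites (Bagaria--Gitman--Schindler, Lemma 2.6) already permits preserving the critical point and finitely many prescribed values, so no device of naming $V_\kappa$ by constants is needed. The real obstruction is \emph{which models the forcing statements are evaluated in}. In your direction \eqref{item:kappa-is-crit-j}$\Rightarrow$\eqref{item:kappa-is-image-of-crit-j} you assert that $N$ satisfies ``there is a generic elementary embedding $e : V_{\bar\lambda} \to V_\gamma$ with $\dots$,'' witnessed by $e = j \restriction V_\lambda$. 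But $N$ is only a transitive set with $V_\beta \subset N$: nothing in Definition \ref{defn:kappa-is-crit-j} or in the proof of Lemma \ref{lem:zfc-models} makes $N$ a model of ZFC, so ``$N$ satisfies [a forcing statement]'' is not well defined, and the tree argument underlying the absoluteness lemma (the inner model must be able to define the tree of finite partial elementary maps and turn illfoundedness into an actual branch) cannot be run inside $N$; note also that $j \restriction V_\lambda$ lives in a generic extension of $V$, not of $N$. The same defect recurs at the end of your argument: after pulling the existential assertion back through $j : V_\beta \to N$ you land in $V_\beta$, and $\beta$ is an arbitrary ordinal, so $V_\beta$ need not satisfy ZFC and the final transfer to $V$ breaks. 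The paper's fix is precisely to restrict $j$ to $V_\beta$ with $\beta = \max(j(f)(\kappa), \kappa+1) < \lambda$, so that both $V_\beta$ and its image $V_{j(\beta)}^M$ are \emph{elements} of the ZFC model $M$ supplied by Lemma \ref{lem:zfc-models}; the statement is evaluated in $M$ and pulled back through $j$ to $V_\lambda$, which satisfies ZFC because $\lambda$ is inaccessible.

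In direction \eqref{item:kappa-is-image-of-crit-j}$\Rightarrow$\eqref{item:kappa-is-crit-j} two essential ideas are missing. First, statement \eqref{item:kappa-is-image-of-crit-j} gives no control whatsoever over $\bar\lambda$ and $\lambda$; you say they can be ``arranged'' with $\lambda$ inaccessible, but the only route to this is the paper's preliminary maneuver: show that \eqref{item:kappa-is-image-of-crit-j} implies ineffability (as in Proposition \ref{prop:ineffable}), hence that $\kappa$ is a limit of inaccessibles, and then apply \eqref{item:kappa-is-image-of-crit-j} to the auxiliary function $f^+$ (least inaccessible above $f(\cdot)$), so that after restricting one may take $\bar\lambda = f^+(\bar\kappa)$ inaccessible; that $V_\lambda$ models ZFC then comes for free by elementarity of $j$, not by direct arrangement. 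Second, and more seriously, even with all parameters in $\ran(j)$, the assertion ``$\bar\kappa$ is virtually Shelah with respect to $\bar f$'' is not yet a statement \emph{about} $V_{\bar\lambda}$: the embedding produced by absoluteness maps $V_{\bar\beta}$ into $V_\beta$, where $\beta = j(\bar\beta) > \kappa > \bar\lambda$, a structure of rank far too large to appear in $V_{\bar\lambda}$ or in any of its generic extensions. This is exactly why the paper needs the Skolem hull step: working in $V[g]$, one takes a countable hull of $V_\beta$ containing the range of the embedding, collapses it to a countable transitive $M' \in V_{\bar\lambda}[g]$, and only then is the configuration expressible as a forcing statement over $V_{\bar\lambda}$ (using the inaccessibility of $\bar\lambda$), which can be reflected upward through $j$. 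Your proposal contains no step that shrinks the target, and naming rank-initial segments by constants does not supply one; without it, the reflection through $h$ has nothing first-order to reflect.
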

\begin{proof}
 \eqref{item:kappa-is-crit-j} $\implies$ \eqref{item:kappa-is-image-of-crit-j}:
 Assume that $\kappa$ is virtually Shelah and let $f: \kappa \to \kappa$. By Lemma \ref{lem:zfc-models} there is an inaccessible cardinal $\lambda > \kappa$, a transitive model $M$ of ZFC with $V_\lambda \subset M$, and a generic elementary embedding
 \[j : V_\lambda \to M \text{ with }\crit(j) = \kappa \text{ and }j(f)(\kappa) < \lambda < j(\kappa).\]
 Define $\beta = \max(j(f)(\kappa), \kappa+1)$, so $\kappa < \beta < \lambda$ and $V_\beta^M = V_\beta$.
 Let $j_1 = j \restriction V_\beta$ and note that
 \[j_1 : V_\beta \to V_{j(\beta)}^M \text{ and } \crit(j_1) = \kappa \text{ and } j_1(\kappa) = j(\kappa) \text{ and } j_1(f) = j(f).\]
 Let $G \subset \Col(\omega,V_\beta)$ be a $V$-generic filter.
 Then by the absoluteness of elementary embeddability of countable structures there is an elementary embedding $j_2 \in M[G]$ such that 
 \[j_2 : V_\beta \to V_{j(\beta)}^M \text{ and } \crit(j_2) = \kappa \text{ and } j_2(\kappa) = j(\kappa) \text{ and } j_2(f) = j(f).\]
 Then we have
 \[j_2(\crit(j_2)) = j(\kappa) \text{ and } j(f) \in \ran(j_2) \text{ and }j(f)(\crit(j_2)) \le \beta\]
 because $j(f)(\kappa) \le \beta$. By the elementarity of $j$ and the definability of forcing there is an ordinal $\bar{\beta} < \lambda$ such that, letting $g \subset \Col(\omega,V_{\bar{\beta}})$ be a $V$-generic filter, there is an elementary embedding $j_3 \in V_\lambda[g]$ with
 \[j_3 : V_{\bar{\beta}} \to V_{\beta} \text{ and } j_3(\crit(j_3)) = \kappa \text{ and } f \in \ran(j_3) \text{ and } f(\crit(j_3)) \le \bar{\beta}.\]
 Therefore statement \ref{item:kappa-is-image-of-crit-j} holds for $f$.

 \eqref{item:kappa-is-image-of-crit-j} $\implies$ \eqref{item:kappa-is-crit-j}:
 Assume that statement \ref{item:kappa-is-image-of-crit-j} holds and let $f: \kappa \to \kappa$. Note that statement \ref{item:kappa-is-image-of-crit-j} implies the ineffability of $\kappa$ by an argument similar to Proposition \ref{prop:ineffable}. It follows that $\kappa$ is a limit of inaccessible cardinals, so by increasing the values of $f$ we may assume that for all $\alpha < \kappa$, $f(\alpha)$ is an inaccessible cardinal greater than $\alpha$ and we may furthermore define $f^+(\alpha)$ to be the least inaccessible cardinal greater than $f(\alpha)$. Applying statement \ref{item:kappa-is-image-of-crit-j} to the function $f^+: \kappa \to \kappa$ yields ordinals $\bar{\lambda}$ and $\lambda$ and a generic elementary embedding
 \[j :V_{\bar{\lambda}} \to V_\lambda \text{ with } j(\bar{\kappa}) = \kappa \text{ and } f^+ \in \ran(j) \text{ and } f^+(\bar{\kappa}) \le \bar{\lambda}\]
 where $\bar{\kappa} = \crit(j)$. By restricting $j$ if necessary we may assume that $\bar{\lambda}$ is equal to $f^+(\bar{\kappa})$ and is therefore an inaccessible cardinal less than $\kappa$. Let $\bar{\beta} = f(\bar{\kappa})$ and $\beta = j(\bar{\beta})$.
 
 Note that $f^+ \in \ran(j)$ implies $f \in \ran(j)$ because for all $\alpha < \kappa$, $f(\alpha)$ is definable in $V_\lambda$ as the largest inaccessible cardinal less than $f^+(\alpha)$, so we may define $\bar{f} = j^{-1}(f)$.
 Then we have $\bar{f} : \bar{\kappa} \to \bar{\kappa}$ and $\bar{\beta} = j(\bar{f})(\bar{\kappa})$. Note that
 \[ \bar{\kappa} < \bar{\beta} < \bar{\lambda} < \kappa < \beta < \lambda.\]
 For the elementary embedding $j_1 = j \restriction V_{\bar{\beta}}$ we have
 \[ j_1 : V_{\bar{\beta}} \to V_\beta \text{ and } \crit(j_1) = \bar{\kappa} \text { and } j_1(\bar{\kappa}) = \kappa \text{ and } j_1(\bar{f}) = f.\]
 Let $g \subset \Col(\omega, V_{\bar{\beta}})$ be a $V$-generic filter. Then by the absoluteness of elementary embeddability of countable structures there is an elementary embedding $j_2 \in V[g]$ such that
 \[ j_2 : V_{\bar{\beta}} \to V_\beta \text{ and } \crit(j_2) = \bar{\kappa} \text { and } j_2(\bar{\kappa}) = \kappa \text{ and } j_2(\bar{f}) = f.\]
 Letting $M = V_\beta$ we therefore have 
 \[ V_{\bar{\beta}} \subset M \text{ and } j_2 : V_{\bar{\beta}} \to M \text{ and } \crit(j_2) = \bar{\kappa} \text { and } j_2(\bar{f})(\bar{\kappa}) = \bar{\beta}.\]
 Because $\bar{\lambda}$ is inaccessible, a Skolem hull argument in $V[g]$ yields a transitive set $M' \in V_{\bar{\lambda}}[g]$ and an elementary embedding $j_3 \in V_{\bar{\lambda}}[g]$ such that
 \[ V_{\bar{\beta}} \subset M' \text{ and } j_3 : V_{\bar{\beta}} \to M' \text{ and } \crit(j_3) = \bar{\kappa} \text { and } j_3(\bar{f})(\bar{\kappa}) = \bar{\beta}.\]
 Let $G \subset \Col(\omega, V_\beta)$ be a $V$-generic filter. By the elementarity of $j$ and the definability of forcing, there is a transitive set $M'' \in V_{\lambda}[G]$ and an elementary embedding $j_4 \in V_{\lambda}[G]$ with
 \[V_{\beta} \subset M'' \text{ and } j_4 : V_{\beta} \to M'' \text{ and } \crit(j_4) = \kappa \text{ and } j_4(f)(\kappa) = \beta.\]
 Therefore $\kappa$ is virtually Shelah with respect to $f$.
\end{proof}

\begin{rem}
 If statement \ref{item:kappa-is-image-of-crit-j} of Proposition \ref{prop:downward-reformulation} is ``unvirtualized'' by requiring  $j$ to exist in $V$, the resulting statement is equivalent to the Shelah-for-supercompactness property defined by Perlmutter \cite[Definition 2.7]{PerSupercompactAlmostHuge}. Virtually Shelah cardinals are therefore also virtually Shelah-for-supercompactness. This is similar to the fact that remarkable cardinals can be considered either as virtually strong cardinals or as virtually supercompact cardinals.
\end{rem}

\section{Relation to other large cardinal properties}\label{section:relation-to-other-LC}

It is clear from the definition that every Shelah cardinal is virtually Shelah. In fact the virtual Shelah property is much weaker than the Shelah property by the following result, which is typical of virtual large cardinal properties:

\begin{prop}\label{prop:silver-indiscernibles-are-v-shelah}
 If $0^\sharp$ exists then every Silver indiscernible is virtually Shelah in $L$.
\end{prop}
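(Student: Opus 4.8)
The plan is to exploit the elementary embeddings $j : L \to L$ supplied by $0^\sharp$ and to push their restrictions into a generic extension of $L$ using the absoluteness of elementary embeddability of countable structures, in the same style as the proof of Proposition \ref{lem:absolute-to-L}. First I would fix a Silver indiscernible $\kappa$ and a function $f : \kappa \to \kappa$ with $f \in L$, the goal being to verify Definition \ref{defn:kappa-is-crit-j} for $\kappa$ and $f$ as computed inside $L$. Since $0^\sharp$ exists, the order-preserving shift of the class of Silver indiscernibles that is the identity on indiscernibles below $\kappa$ and sends each indiscernible $\ge \kappa$ to the next one induces an elementary embedding $j : L \to L$ with $\crit(j) = \kappa$, using the standard fact that every ordinal below a Silver indiscernible $\kappa$ is definable in $L$ from indiscernibles below $\kappa$ and is hence fixed, while $\kappa$ itself is moved.

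The ordinal $j(f)(\kappa)$ is now a fixed value, independent of any later choice, so since the Silver indiscernibles form an unbounded class I would choose a Silver indiscernible $\lambda$ with $\lambda > \kappa$ and $\lambda \ge j(f)(\kappa)$. Being a Silver indiscernible, $\lambda$ is inaccessible in $L$, so $V_\lambda^L = L_\lambda$; moreover $f \in V_\lambda^L$ because $\rank(f) < \lambda$. Restricting $j$ then produces, in $V$, an elementary embedding $j \restriction L_\lambda : L_\lambda \to L_{j(\lambda)}$ with $\crit(j \restriction L_\lambda) = \kappa$, with $L_\lambda \subseteq L_{j(\lambda)}$ (since $j(\lambda) > \lambda$ as $\crit(j) = \kappa < \lambda$), and with $(j \restriction L_\lambda)(f)(\kappa) = j(f)(\kappa) \le \lambda$. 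This is the desired witness except that it lives in $V$ rather than in a generic extension of $L$.

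To remedy this I would expand $L_\lambda$ by constants naming $f$, naming $\kappa$, and naming each ordinal $\xi < \kappa$, and expand $L_{j(\lambda)}$ by interpreting these constants as $j(f)$, as $j(\kappa)$, and as $\xi$ respectively; then $j \restriction L_\lambda$ is an elementary embedding of the expanded structures. Letting $G \subseteq \Col(\omega, L_\lambda)$ be $L$-generic, the structure $L_\lambda$ becomes countable, and since the embedding of the expanded structures exists in the outer model $V[G]$, the absoluteness of elementary embeddability of countable structures (Bagaria, Gitman, and Schindler \cite[Lemma 2.6]{BagGitSchGenericVopenka}) yields such an embedding $j_2 \in L[G]$. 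By the placement of the constants, any elementary embedding of the expanded structures fixes every $\xi < \kappa$, sends $\kappa$ to $j(\kappa) > \kappa$, and sends $f$ to $j(f)$; hence $j_2 : L_\lambda \to L_{j(\lambda)}$ has $\crit(j_2) = \kappa$ and $j_2(f)(\kappa) = j(f)(\kappa) \le \lambda$, so $j_2$ witnesses the virtual Shelah property of $\kappa$ in $L$ with respect to $f$.

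I expect the step requiring the most care to be this final transfer, and specifically the bookkeeping with the auxiliary constants: the absoluteness lemma only guarantees \emph{some} generic embedding of the expanded countable structure, so the constants must be arranged so that this embedding automatically inherits the exact critical point $\kappa$ and the Shelah inequality $j(f)(\kappa) \le \lambda$ rather than a weaker conclusion. The genuinely Shelah-flavored content, by contrast, is cheap here, since $j(f)(\kappa)$ is a fixed ordinal and the unboundedness of the Silver indiscernibles lets us take $\lambda$ above it.
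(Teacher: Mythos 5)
Your proof is correct and is essentially the paper's own argument: obtain an embedding $j : L \to L$ with $\crit(j) = \kappa$ from $0^\sharp$, restrict it to a rank initial segment of $L$ of height at least $\max(j(f)(\kappa), \kappa+1)$, and transfer the restriction into $L[G]$ via the absoluteness of elementary embeddability of countable structures. The only cosmetic differences are that the paper takes $\lambda = \max(j(f)(\kappa),\kappa+1)$ rather than an indiscernible and works with $V_\lambda^L$ directly, and that it invokes the ``moreover'' clause of the cited absoluteness lemma (which already allows preserving the critical point and the images of finitely many points such as $f$ and $\kappa$) in place of your constant-expansion bookkeeping; note also that $G$ should be taken $V$-generic (hence in particular $L$-generic), so that both $V[G]$ and the generic extension $L[G]$ of $L$ make sense.
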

\begin{proof}
 Assume that $0^\sharp$ exists and let $\kappa$ be a Silver indiscernible.  Then there is an elementary embedding $j: L \to L$ with $\crit(j) = \kappa$.  Let $f : \kappa \to \kappa$ be a function in $L$ and define $\lambda = \max(j(f)(\kappa), \kappa+1)$.
 For the elementary embedding $j_1 = j \restriction V_\lambda^L$ we have
 \[ j_1 : V_\lambda^L \to V_{j(\lambda)}^L \text{ and } \crit(j_1) = \kappa \text{ and } j_1(\kappa) = j(\kappa) \text{ and } j_1(f) = j(f).\]
 Let $G \subset \Col(\omega,V_\lambda^L)$ be a $V$-generic filter.
 Then by the absoluteness of elementary embeddability of countable structures there is an elementary embedding $j_2 \in L[G]$ such that
 \[ j_2 : V_\lambda^L \to V_{j(\lambda)}^L \text{ and } \crit(j_2) = \kappa \text{ and } j_2(\kappa) = j(\kappa) \text{ and } j_2(f) = j(f).\]
 We have $j_2(f)(\kappa) = j(f)(\kappa) \le \lambda$, so $j_2$ witnesses the virtual Shelah property for $\kappa$ in $L$ with respect to the function $f$.
\end{proof}

We can obtain a better upper bound for the consistency strength of virtually Shelah cardinals in terms of the hierarchy of $\alpha$-iterable cardinals defined by Gitman \cite{GitRamsey}. If $0^\sharp$ exists then every Silver indiscernible is $\alpha$-iterable in $L$ for every ordinal $\alpha < \omega_1^L$ by Gitman and Welch \cite[Theorem 3.11]{GitWelRamseyII}, and we will show that 2-iterable cardinals already exceed virtually Shelah cardinals in consistency strength. We will not need the definition of $\alpha$-iterability below, only a certain established consequence of it in the case $\alpha = 2$.
 
\begin{prop}\label{prop:2-iterable-stationary-limit}
 If $\kappa$ is a $2$-iterable cardinal then $\kappa$ is a stationary limit of cardinals that are virtually Shelah in $V_\kappa$.
\end{prop}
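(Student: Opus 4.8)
The plan is to run a reflection argument off a (generic) elementary embedding with critical point $\kappa$, using the extra iteration step to push the virtual Shelah property into the target. Since $2$-iterable cardinals are weakly compact, $\kappa$ is inaccessible and in particular regular, so it suffices to show that $S = \{\alpha < \kappa : \alpha \text{ is virtually Shelah in } V_\kappa\}$ is stationary; unboundedness (hence that $\kappa$ is a limit) then follows from regularity. So I fix a club $C \subseteq \kappa$ and look for $\alpha \in C \cap S$.

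First I would invoke the established consequence of $2$-iterability with the set $C$: there are a weak $\kappa$-model $M_0$ with $V_\kappa \subseteq M_0$ and $C \in M_0$, and a weakly amenable $M_0$-ultrafilter $U$ on $\kappa$ whose length-two iterated ultrapower is well-founded. Let $j : M_0 \to M_1 = \mathrm{Ult}(M_0, U)$ be the ultrapower embedding, so $\crit(j) = \kappa$ and, since $C$ is club in $\kappa$ and $C \in M_0$, we have $\kappa \in j(C)$. The whole argument then reduces to the single claim that $M_1 \models$ ``$\kappa$ is virtually Shelah'': granting this, $\kappa$ witnesses $M_1 \models \exists \alpha \in j(C)\ (\alpha \text{ is virtually Shelah})$, so by the elementarity of $j$ there is $\alpha \in C$ with $M_0 \models$ ``$\alpha$ is virtually Shelah''. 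For such $\alpha < \kappa$ the virtual Shelah property is witnessed below the next inaccessible above $\alpha$ (as in Lemma \ref{lem:zfc-models}), hence below $\kappa$; since $V_\kappa^{M_0} = V_\kappa$ and the relevant generic embeddings of sets of rank $< \kappa$ are absolute between $M_0$ and $V$ by the absoluteness of generic embeddability of countable structures, $\alpha$ is genuinely virtually Shelah in $V_\kappa$, i.e. $\alpha \in C \cap S$.

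The heart of the matter --- and the only place where $2$-iterability rather than $1$-iterability is used --- is the claim $M_1 \models$ ``$\kappa$ is virtually Shelah''. Here I would work inside $M_1$ through the reformulation of Proposition \ref{prop:downward-reformulation}: it suffices, for each $f : \kappa \to \kappa$ in $M_1$, to produce a generic-over-$M_1$ elementary embedding $e : V_{\bar\lambda}^{M_1} \to V_\lambda^{M_1}$ with $e(\crit(e)) = \kappa$, $f \in \ran(e)$ and $f(\crit(e)) \le \bar\lambda$. By the absoluteness of generic embeddability of countable structures (applied in $M_1$), this reduces to exhibiting in $V$ an elementary embedding between the appropriate rank-initial segments of $M_1$ whose critical point lies below $\kappa$ and is sent to $\kappa$. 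The extra, second well-founded ultrapower furnished by $2$-iterability is exactly what makes such an embedding available: it provides a genuine elementary embedding with critical point $\kappa$ whose domain is a rank-initial segment of $M_1$ itself (and not merely of $M_0$, with which $M_1$ disagrees above $\kappa$), which a reflection and Skolem-hull argument in the style of Lemma \ref{lem:zfc-models} and Proposition \ref{prop:downward-reformulation} then converts into the required small-critical-point embedding inside $M_1$, with a target of bounded rank.

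I expect this middle step to be the main obstacle, for two related reasons that are precisely what separate the $2$-iterable from the $1$-iterable case. First, to witness the virtual Shelah property of $\kappa$ inside $M_1$ one needs embeddings whose domains are $M_1$'s own $V_{\bar\lambda}$, which differ from those of $M_0$ above $\kappa$; matching these forces one to use the image of $U$ under $j$ and hence the second iterate. Second, the naive ultrapower target is too tall to be an element of $M_1$, so it must be replaced --- via truncation together with the absoluteness of generic embeddability --- by a transitive set in $M_1$ of sufficiently small rank. Once the bookkeeping of which rank-initial segments of the successive iterates coincide is in place, the reflection off $\kappa \in j(C)$ delivers the desired $\alpha \in C \cap S$, and since $C$ was arbitrary $S$ is stationary, so $\kappa$ is a stationary limit of cardinals virtually Shelah in $V_\kappa$.
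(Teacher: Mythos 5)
Your overall architecture --- reflect off an embedding with critical point $\kappa$, show the target model thinks $\kappa$ is virtually Shelah, then pull a witness back into $V_\kappa$ --- is the same as the paper's, but the two steps you leave open are genuine gaps, and they are exactly the places where the paper leans on the specific form of the Gitman--Welch result rather than on the raw definition of $2$-iterability. The paper invokes Gitman--Welch [Theorem 4.7] to get a single elementary embedding $j : M \to N$ between transitive ZFC models with $\crit(j) = \kappa$, $V_\kappa \in M$, and, crucially, $M = V_{j(\kappa)}^N$. With that configuration your central claim becomes routine: any $f : \kappa \to \kappa$ in $N$ already lies in $M = V_{j(\kappa)}^N$, and for $\lambda = \max(j(f)(\kappa), \kappa+1) < j(\kappa) = \Ord^M$ the restriction $j \restriction V_\lambda^M$ is an embedding $V_\lambda^N \to V_{j(\lambda)}^N$ between rank-initial segments of $N$, which the absoluteness of embeddability of countable structures converts into a generic embedding inside $N$. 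In your setup the corresponding claim, $M_1 \models$ ``$\kappa$ is virtually Shelah,'' is never proved: you explicitly flag it as the main obstacle and offer only a sketch (use $j(U)$, the second iterate, and a Skolem-hull argument). Carrying that sketch out amounts to reproving Gitman--Welch Theorem 4.7, since for $\lambda > \kappa+1$ the set $V_\lambda^{M_1}$ is not contained in $M_0$ (weak amenability only gives agreement up to $V_{\kappa+1}$), so no restriction of your ultrapower embedding has the required domain. The proposal is therefore missing its key step rather than supplying an alternative proof of it.

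The final step is also flawed as written. You pass from ``$M_0 \models \alpha$ is virtually Shelah'' to ``$\alpha$ is virtually Shelah in $V_\kappa$'' via the claim that the virtual Shelah property of $\alpha$ is witnessed below the next inaccessible above $\alpha$; that claim is false, and it is not what Lemma \ref{lem:zfc-models} says. For example, take $f(\xi) = \mu_\xi + 1$ where $\mu_\xi$ is the least inaccessible above $\xi$ (defined on a virtually Shelah $\alpha$ since such $\alpha$ is a limit of inaccessibles): any witness $e : V_\lambda \to M'$ with $V_\lambda \subset M'$ and $e(f)(\alpha) \le \lambda$ forces $\lambda$ above the least true inaccessible above $\alpha$, because $V_\lambda \subset M'$ makes $M'$ correct about inaccessibility below $\lambda$, so the $M'$-inaccessible in the interval $(\alpha, e(f)(\alpha))$ is a real one. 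More generally, witnesses inside the weak $\kappa$-model $M_0$ may have rank $\ge \kappa$, where $V_\lambda^{M_0}$ is no longer a true rank-initial segment, and there is no general way to compress them below $\kappa$. The paper never needs such a compression: since $M = V_{j(\kappa)}^N$ one has $j(V_\kappa) = M$, so elementarity applied to the formula ``$x$ is virtually Shelah in the transitive set $y$'' with $x = \bar\kappa = j(\bar\kappa)$ and $y = M = j(V_\kappa)$ transfers ``$\bar\kappa$ is virtually Shelah in $M$'' directly to ``$\bar\kappa$ is virtually Shelah in $V_\kappa$.'' In your setup $j(V_\kappa) = V_{j(\kappa)}^{M_1}$, which bears no useful relation to $M_0$, so this route is closed: reflecting the statement ``$\alpha$ is virtually Shelah'' relative to the full model $M_0$ is simply too weak to yield the conclusion about $V_\kappa$.
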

\begin{proof}
 Assume that $\kappa$ is 2-iterable. Then by Gitman and Welch \cite[Theorem 4.7]{GitWelRamseyII} there is a transitive model $M$ of ZFC with $V_\kappa \in M$ and an elementary embedding $j : M \to N$ with critical point $\kappa$ where $N$ is a transitive model of ZFC and $M = V_{j(\kappa)}^N$.
  
 First, we show that $\kappa$ is virtually Shelah in $N$.  Let $f : \kappa \to \kappa$ be in $N$ and therefore also in $M$.  Define $\lambda = \max(j(f)(\kappa), \kappa + 1)$.  Because $j(f)$ is a function from $j(\kappa)$ to $j(\kappa)$ we have $\lambda < j(\kappa) = \Ord^M$, so $V_\lambda^M = V_\lambda^N$ and we have an elementary embedding $j_1 = j \restriction V_\lambda^M$ with
 \[ j_1 : V_\lambda^N \to V_{j(\lambda)}^N \text{ and } \crit(j_1) = \kappa \text{ and } j_1(\kappa) = j(\kappa)\text{ and } j_1(f) = j(f).\]
 Let $G \subset \Col(\omega,V_\lambda^N)$ be a $V$-generic filter.
 Then by the absoluteness of elementary embeddability of countable structures there is an elementary embedding $j_2 \in N[G]$ such that
 \[ j_2 : V_\lambda^N \to V_{j(\lambda)}^N \text{ and } \crit(j_2) = \kappa \text{ and } j_2(\kappa) = j(\kappa)\text{ and } j_2(f) = j(f).\]
 Because $j_2(f)(\kappa) = j(f)(\kappa) \le \lambda$, this elementary embedding $j_2$ witnesses the virtual Shelah property for $\kappa$ in $N$ with respect to $f$.

 Now let $C$ be club in $\kappa$. Then $\kappa \in j(C)$, so the model $N$ satisfies the statement ``there is a virtually Shelah cardinal in $j(C)$'' and by the elementarity of $j$ it follows that the model $M$ satisfies the statement  ``there is a virtually Shelah cardinal in $C$.'' Let $\bar{\kappa} \in C$ be virtually Shelah in $M$. Because we have $j(\bar{\kappa}) = \bar{\kappa}$ and 
 \[j(V_\kappa) = j(V_\kappa^M) = V_{j(\kappa)}^N = M,\]
 it follows by the elementarity of $j$ that $\bar{\kappa}$ is virtually Shelah in $V_\kappa$.
\end{proof}

Because the set of all ordinals $\alpha< \kappa$ such that $V_\alpha \prec_{\Sigma_2} V_\kappa$ is club in $\kappa$, the conclusion of Proposition \ref{prop:2-iterable-stationary-limit} implies that $\kappa$ is a stationary limit of cardinals that are both $\Sigma_2$-reflecting and virtually Shelah in $V_\kappa$. (Note that the virtual Shelah property is upward absolute from $V_\kappa$ to $V$ but the $\Sigma_2$-reflecting property might not be.) It follows that ZFC + ``there is a $\Sigma_2$-reflecting virtually Shelah cardinal'' has lower consistency strength than ZFC + ``there is a 2-iterable cardinal.''

Recall that every virtually Shelah cardinal is ineffable by Proposition \ref{prop:ineffable}. We can obtain a better lower bound for the consistency strength of virtually Shelah cardinals in terms of the virtually $A$-extendible cardinals defined by Gitman and Hamkins \cite[Definition 6]{GitHamOrdNotDelta2Mahlo}: For a cardinal $\alpha$ and a class $A$ (meaning either a definable class in ZFC or an arbitrary class in GBC) we say that $\alpha$ is \emph{virtually $A$-extendible} if for every ordinal $\beta > \alpha$ there is an ordinal $\theta$ and a generic elementary embedding 
\[j : (V_\beta; \mathord{\in},A \cap V_\beta) \to (V_\theta; \mathord{\in},A \cap V_\theta)\]
with $ \crit(j) = \alpha$ and $j(\alpha) > \beta$.
 
\begin{prop}\label{prop:virtually-A-extendible}
 If $\kappa$ is a virtually Shelah cardinal then the structure $(V_\kappa, V_{\kappa+1}; \mathord{\in})$ satisfies the statement ``for every class $A$ there is a virtually $A$-extendible cardinal.''
\end{prop}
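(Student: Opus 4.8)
The plan is to unwind the satisfaction relation of the model $(V_\kappa, V_{\kappa+1}; \mathord{\in})$ and, for each fixed ``class'' $A \subseteq V_\kappa$, to produce a cardinal $\alpha < \kappa$ that is virtually $A$-extendible as computed inside $V_\kappa$. Since $A$ is an arbitrary element of $V_{\kappa+1}$ and the phrase ``generic elementary embedding'' between two set-sized structures lying in $V_\kappa$ is absolute between $V_\kappa$ and $V$ (by the absoluteness of elementary embeddability of countable structures, applied after collapsing), this is exactly what the displayed statement asks for. Before starting I would record a routine reduction: $\alpha$ is virtually $A$-extendible in $V_\kappa$ as soon as the defining embeddings exist for every \emph{reflection point} $\beta$, i.e.\ every $\beta \in (\alpha,\kappa)$ with $(V_\beta; \mathord{\in}, A\cap V_\beta) \prec (V_\kappa; \mathord{\in}, A)$, because an embedding for a larger reflection point restricts to one for any smaller $\beta$ while preserving the predicate and keeping the target rank below $\kappa$.

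First I would argue by contradiction: suppose no $\alpha < \kappa$ is virtually $A$-extendible in $V_\kappa$, and for each $\alpha < \kappa$ let $f(\alpha)$ be the least reflection point $\beta \in (\alpha,\kappa)$ at which $\alpha$ fails, so that there is no generic elementary embedding $(V_\beta; \mathord{\in}, A\cap V_\beta) \to (V_\theta; \mathord{\in}, A\cap V_\theta)$ with $\theta < \kappa$, $\crit = \alpha$, and image of $\alpha$ above $\beta$. Applying the virtual Shelah property to $f$ in the strong form of Lemma~\ref{lem:zfc-models}, I obtain an inaccessible $\lambda > \kappa$, a transitive model $M \models \mathrm{ZFC}$ with $V_\lambda \subseteq M$, and a generic elementary embedding $j : V_\lambda \to M$ with $\crit(j) = \kappa$ and $\beta^* := j(f)(\kappa) < \lambda < j(\kappa)$. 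Note that $A \in V_{\kappa+1} \subseteq V_\lambda$, so $j(A)$ is defined, $j(A)\cap V_\kappa = A$, and, because $\crit(j)=\kappa$, the identity is an elementary embedding $(V_\kappa; \mathord{\in}, A) \prec (V_{j(\kappa)}^M; \mathord{\in}, j(A))$.

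Next I would push the assumed failure through $j$. By elementarity $\beta^*$ is a reflection point of $(V_{j(\kappa)}^M; \mathord{\in}, j(A))$ with $\kappa < \beta^* < j(\kappa)$, and $M$ believes that there is no generic elementary embedding $(V_{\beta^*}; \mathord{\in}, j(A)\cap V_{\beta^*}) \to (V_\theta; \mathord{\in}, j(A)\cap V_\theta)$ with $\theta < j(\kappa)$, critical point $\kappa$, and image of $\kappa$ exceeding $\beta^*$. I would then contradict this by constructing such an embedding and invoking absoluteness of generic embeddability to move it into a generic extension of $M$. The embedding is obtained by a Skolem hull and transitive collapse inside the expanded structure $(V_{j(\kappa)}^M; \mathord{\in}, j(A))$: generating a hull $Z$ from $V_\kappa$ together with a single seed of rank above $\beta^*$, arranging that $Z$ meets no ordinal in $[\kappa,\beta^*]$, and using that $\beta^*$ is a reflection point to see that the transitive collapse of $Z$ is exactly $(V_{\beta^*}; \mathord{\in}, j(A)\cap V_{\beta^*})$. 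The inverse collapse, composed with the inclusion into $(V_\theta^M; \mathord{\in}, j(A)\cap V_\theta)$ for a reflection point $\theta < j(\kappa)$ bounding the (size-$\kappa$, hence rank-bounded) hull, is then a generic elementary embedding with critical point $\kappa$ and $\kappa$ sent above $\beta^*$, as required.

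The hard part will be this last construction, and it is where the defining inequality of the virtual Shelah property earns its keep. The naive candidate $j\restriction V_{\beta^*}$ is elementary with critical point $\kappa$ but sends $\kappa$ all the way to $j(\kappa)$ and lands in $V_{j(\beta^*)}^M$ with $j(\beta^*) > j(\kappa)$, so it violates both the requirement $\theta < j(\kappa)$ and the requirement that $\kappa$'s image lie strictly below $j(\kappa)$; the point of $\beta^* = j(f)(\kappa) < \lambda$ together with $V_\lambda \subseteq M$ is that $V_{\beta^*}$ is a genuine rank-initial segment available inside $M$ at a level low enough to be collapsed back below $j(\kappa)$. The three things that must be checked with care are that the collapse of $Z$ is a rank-initial segment rather than an arbitrary transitive set (using that $V_\kappa$ is among the generators and $\beta^*$ reflects), that the predicate $j(A)$ is preserved on the nose on both source and target (using that $\beta^*$ and $\theta$ are reflection points for $j(A)$ and that the hull is taken in the expanded structure), and that the critical point is exactly $\kappa$ with image above $\beta^*$ (using that $Z$ avoids the interval $[\kappa,\beta^*]$). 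Once the embedding is produced the contradiction is immediate, and, as $A \subseteq V_\kappa$ was arbitrary, the proposition follows.
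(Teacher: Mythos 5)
There is a genuine gap, and it sits exactly where you predicted the difficulty would be: the Skolem hull construction cannot work. If $Z$ is the hull, inside $(V_{j(\kappa)}^M; \mathord{\in}, j(A))$, of $V_\kappa$ together with a single seed, then $\left|Z\right| = \kappa$: since $\kappa$ is inaccessible we have $\left|V_\kappa\right| = \kappa$, and a hull is the closure of its generators under countably many Skolem functions. Hence the transitive collapse of $Z$ also has cardinality $\kappa$, whereas $\left|V_{\beta^*}\right| \ge 2^\kappa > \kappa$ because $\beta^* > \kappa$. So the collapse of $Z$ can never be $(V_{\beta^*}; \mathord{\in}, j(A) \cap V_{\beta^*})$; indeed it can never be a rank initial segment at all, since it properly contains $V_\kappa$ (the collapsed seed gives it ordinals above $\kappa$), and any $V_\gamma$ with $\gamma > \kappa$ is too big. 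The reflection-point property of $\beta^*$ offers no mechanism to repair this, and passing to a generic extension where everything is countable does not help either: the hull and its Mostowski collapse are computed in $M$ and their identity is absolute. So you never produce an embedding whose domain is literally $(V_{\beta^*}; \mathord{\in}, j(A)\cap V_{\beta^*})$, and the reflected failure at $\kappa$ is never contradicted.

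The obstruction is structural, not merely technical: the reflected statement concerns the predicate $j(A)$, and the only embedding available with critical point $\kappa$ is (a restriction of) $j$ itself, whose natural target carries the predicate $j(j(A) \cap V_{\beta^*})$ rather than $j(A)$, and whose target rank $j(\beta^*)$ exceeds $j(\kappa)$ --- which is why your ``naive candidate'' fails. The paper therefore does not attempt to refute the failure at $\kappa$ at all. Instead it takes $j_1 = j \restriction V_\beta$ (where $\beta = j(f)(\kappa)$) as an embedding of $(V_\beta; \mathord{\in}, j(A)\cap V_\beta)$ into $(V_{j(\beta)}^M; \mathord{\in}, j(j(A)\cap V_\beta))$, moves it into a generic extension of $M$ by the absoluteness of elementary embeddability of countable structures, and then pulls the resulting existential statement back through $j$ \emph{twice}: first to obtain some $\alpha < \kappa$ and a generic elementary embedding $(V_{f(\alpha)}; \mathord{\in}, A \cap V_{f(\alpha)}) \to (V_\beta; \mathord{\in}, j(A)\cap V_\beta)$ with critical point $\alpha$ and $\alpha$ mapped above $f(\alpha)$; then --- since the target level $\beta$ now lies below $j(\kappa)$ and every remaining parameter is a $j$-image --- a second time, yielding $\theta < \kappa$ and a generic elementary embedding $(V_{f(\alpha)}; \mathord{\in}, A\cap V_{f(\alpha)}) \to (V_\theta; \mathord{\in}, A \cap V_\theta)$ with critical point $\alpha$ and $\alpha$ mapped above $f(\alpha)$. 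The contradiction with the definition of $f$ is thus reached at some $\alpha < \kappa$, not at $\kappa$. Your setup (the function $f$, the appeal to Lemma \ref{lem:zfc-models}, and pushing the failure through $j$) matches the paper's, except that your reflection-point refinement of $f$ is unnecessary; what must replace your hull construction is this double pull-back.
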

\begin{proof}
 Let $\kappa$ be a virtually Shelah cardinal, let $A \subset V_\kappa$, and assume toward a contradiction that no cardinal less than $\kappa$ is virtually $A$-extendible in $(V_\kappa, V_{\kappa+1}; \mathord{\in})$. Then we may define a function $f:\kappa \to \kappa$ such that for every ordinal $\alpha < \kappa$, $f(\alpha)$ is the least ordinal greater than $\alpha$ such that for every ordinal $\theta < \kappa$ there is no generic elementary embedding from $(V_{f(\alpha)}; \mathord{\in},A \cap V_{f(\alpha)})$ to $(V_\theta; \mathord{\in},A \cap V_\theta)$ that has critical point $\alpha$ and maps $\alpha$ above $f(\alpha)$. 
 
 Because $\kappa$ is virtually Shelah, by Lemma \ref{lem:zfc-models} there is an inaccessible cardinal $\lambda > \kappa$, a transitive model $M$ of ZFC with $V_\lambda \subset M$, and a generic elementary embedding 
 \[j : V_\lambda \to M \text{ with } \crit(j) = \kappa \text { and }j(f)(\kappa) < \lambda < j(\kappa).\]
 Defining $\beta = j(f)(\kappa)$, we have
 \[\kappa < \beta < \lambda < j(\kappa) < j(\beta) < \Ord^M.\]
 Let $j_1 = j \restriction V_\beta$, considered as an elementary embedding whose domain is the structure $(V_\beta; \mathord{\in}, j(A) \cap V_\beta)$.  Then we have
 \[j_1 : (V_\beta; \mathord{\in}, j(A) \cap V_\beta) \to (V_{j(\beta)}^M; \mathord{\in}, j(j(A) \cap V_\beta)) \text{ and } \crit(j_1) = \kappa \text{ and } j_1(\kappa) = j(\kappa).\]
 Let $G \subset \Col(\omega,V_\beta)$ be a $V$-generic filter.
 Then by the absoluteness of elementary embeddability of countable structures there is an elementary embedding $j_2 \in M[G]$ such that
 \[j_2 : (V_\beta; \mathord{\in}, j(A) \cap V_\beta) \to (V_{j(\beta)}^M; \mathord{\in}, j(j(A) \cap V_\beta)) \text{ and } \crit(j_2) = \kappa \text{ and } j_2(\kappa) = j(\kappa).\]
 Note that $j_2(\kappa) = j(\kappa) > j(f)(\kappa) = \beta$.
 By the elementarity of $j$ and the definability of forcing it follows that there is a cardinal $\alpha < \kappa$ such that, letting $g \subset \Col(\omega,V_{f(\alpha)})$ be a $V$-generic filter, there is an elementary embedding $j_3 \in V_\lambda[g]$ with
 \[j_3 : (V_{f(\alpha)}; \mathord{\in}, A \cap V_{f(\alpha)}) \to (V_{\beta}; \mathord{\in}, j(A) \cap V_\beta) \text{ and } \crit(j_3) = \alpha \text{ and } j_3(\alpha)  > f(\alpha).\]
 Because $V_\lambda \subset M$ we have $j_3 \in M[g]$,
 and because $\beta < j(\kappa)$ it then follows by the elementarity of $j$
 that there is an ordinal $\theta < \kappa$ and an elementary embedding $j_4 \in V_\lambda[g]$ such that
 \[j_4 : (V_{f(\alpha)}; \mathord{\in}, A \cap V_{f(\alpha)}) \to (V_{\theta}; \mathord{\in}, A \cap V_\theta) \text{ and } \crit(j_4) = \alpha \text{ and } j_4(\alpha) > f(\alpha).\]
 The existence of such a generic elementary embedding $j_4$ contradicts the definition of $f$.
\end{proof}

We can state a further consequence of the virtual Shelah property in terms of the generic Vop\v{e}nka principle defined by Bagaria, Gitman, and Schindler \cite{BagGitSchGenericVopenka}, which says that for every proper class of structures of the same type there is a generic elementary embedding from one of the structures into another. The generic Vop\v{e}nka principle, formalized as a statement in GBC, follows from the existence of a virtually $A$-extendible cardinal for every class $A$ by Gitman and Hamkins \cite[Theorem 7]{GitHamOrdNotDelta2Mahlo}. Combining this fact with Proposition \ref{prop:virtually-A-extendible} gives the following result.

\begin{cor}
 If $\kappa$ is a virtually Shelah cardinal then the generic Vop\v{e}nka principle holds in the structure $(V_\kappa, V_{\kappa+1}; \mathord{\in})$.
\end{cor}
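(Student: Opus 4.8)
The plan is to combine Proposition \ref{prop:virtually-A-extendible} with the implication of Gitman and Hamkins \cite[Theorem 7]{GitHamOrdNotDelta2Mahlo} by relativizing the latter to the GBC structure $(V_\kappa, V_{\kappa+1}; \mathord{\in})$. First I would observe that a virtually Shelah cardinal $\kappa$ is inaccessible: by Proposition \ref{prop:ineffable} it is ineffable, and hence an inaccessible limit of inaccessibles as noted after Lemma \ref{lem:zfc-models}. It follows that the two-sorted structure $(V_\kappa, V_{\kappa+1}; \mathord{\in})$, whose sets are the elements of $V_\kappa$ and whose classes are the elements of $V_{\kappa+1} = \powerset(V_\kappa)$, is a model of GBC: the ZFC axioms hold in $V_\kappa$, class comprehension holds because every definable subclass of $V_\kappa$ lies in $V_{\kappa+1}$, and global choice holds because a well-ordering of $V_\kappa$ exists as a class in $V_{\kappa+1}$.

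Next I would invoke Proposition \ref{prop:virtually-A-extendible}, which asserts precisely that the structure $(V_\kappa, V_{\kappa+1}; \mathord{\in})$ satisfies the statement ``for every class $A$ there is a virtually $A$-extendible cardinal.'' This is exactly the hypothesis of the Gitman--Hamkins theorem, which establishes, as an implication formalized in GBC, that the generic Vop\v{e}nka principle follows from the existence of a virtually $A$-extendible cardinal for every class $A$.

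Finally, because the Gitman--Hamkins result is an implication provable in GBC, it holds in every model of GBC, and in particular in $(V_\kappa, V_{\kappa+1}; \mathord{\in})$. Interpreting that implication inside this model and feeding it the antecedent supplied by Proposition \ref{prop:virtually-A-extendible}, we obtain the consequent: the generic Vop\v{e}nka principle holds in $(V_\kappa, V_{\kappa+1}; \mathord{\in})$, as desired.

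The one point requiring care is the formalization. Both the virtual extendibility hypothesis and the generic Vop\v{e}nka principle are statements in the language of GBC that quantify over classes, so the argument depends on the fact that the Gitman--Hamkins proof is carried out purely within GBC and appeals to no feature special to the ambient universe $V$; this is what licenses its relativization to the GBC model $(V_\kappa, V_{\kappa+1}; \mathord{\in})$. Granting this routine relativization, the only genuine input from the present paper is Proposition \ref{prop:virtually-A-extendible}, and the corollary follows immediately.
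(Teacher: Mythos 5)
Your proposal is correct and is essentially the same argument the paper gives: it cites Proposition \ref{prop:virtually-A-extendible} for the hypothesis and Gitman--Hamkins \cite[Theorem 7]{GitHamOrdNotDelta2Mahlo} (as a GBC-provable implication) to conclude the generic Vop\v{e}nka principle inside $(V_\kappa, V_{\kappa+1}; \mathord{\in})$. Your added verification that this structure models GBC (using inaccessibility of $\kappa$, which follows from ineffability via Proposition \ref{prop:ineffable}) is a detail the paper leaves implicit, but it is the same proof.
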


\begin{rem}
 Gitman and Hamkins \cite[Theorem 7]{GitHamOrdNotDelta2Mahlo} showed more specifically that the generic Vop\v{e}nka principle is equivalent to existence of a weakly virtually $A$-extendible cardinal for every class $A$, where the definition of weak virtual $A$-extendibility is obtained from the definition of virtual $A$-extendibility by removing the condition on the image of the critical point. Solovay, Reinhardt, and Kanamori \cite[Theorem 6.9]{SolReiKanStrongAxioms} proved an analogous result in the non-virtual setting, where Kunen's inconsistency erases the distinction between weak extendibility and extendibility: Vop\v{e}nka's principle is equivalent to the existence of an $A$-extendible cardinal for every class $A$.
\end{rem}

\section{Thin universally Baire sets of reals}\label{sec:thin-uB-sets}
 
In this section we will establish some equivalent conditions for a universally Baire set of reals to be \emph{thin}, meaning to have no perfect subset. To do this we will need a characterization of universal Baireness in terms of trees and forcing due to Feng, Magidor and Woodin \cite{FenMagWoo}.

For a class $X$, a \emph{tree on $X$} is a subset of $X^{\mathord{<}\omega}$ that is closed under initial segments. For a tree $T$ on $X$ we let $[T]$ denote the set of all infinite branches of $T$. Note that $[T]$ is a closed subset of $X^\omega$ (where $X$ has the discrete topology) and conversely every closed subset of $X^\omega$ is the set of branches of some tree on $X$.

We will typically consider trees on $\omega \times \Ord$, whose elements we may think of either as finite sequences of pairs or as pairs of finite sequences.  Because we require trees to be sets, a tree on $\omega \times \Ord$ is actually a tree on $\omega \times \gamma$ for some ordinal $\gamma$, but there is usually no need to specify a particular ordinal.

The letter $\p$ denotes projection:
\[\p[T]= \{x \in \omega^\omega : (x,f) \in [T] \text{ for some }f \in \Ord^\omega\}.\]
For a tree $T$ on $\omega \times \Ord$ and a real $x \in \omega^\omega$ we define the \emph{section} $T_x$ of $T$ as the set of all $s \in \Ord^{\mathord{<}\omega}$ such that $(x \restriction \left| s \right|, s) \in T$. Note that $T_x$ is a tree on $\Ord$ that is illfounded if and only if $x \in \p[T]$, so the statement $x \in \p[T]$ is absolute to all transitive models of ZFC containing $x$ and $T$ by the absoluteness of wellfoundedness.

A pair of trees $(T,\tilde{T})$ on $\omega \times \Ord$ is \emph{complementing} if 
\[\p[T] = \omega^\omega \setminus \p[\tilde{T}],\]
and for a poset $\mathbb{P}$ it is \emph{$\mathbb{P}$-absolutely complementing} if it is complementing in every generic extension of $V$ by $\mathbb{P}$. The statement $\p[T] \cap \p[\tilde{T}] = \emptyset$ is generically absolute by the absoluteness of wellfoundedness of the tree of all triples $(r,s_1,s_2)$ such that $(r, s_1) \in T$ and $(r,s_2) \in \tilde{T}$, so a complementing pair of trees $(T,\tilde{T})$ is $\mathbb{P}$-absolutely complementing if and only if $\p[T] \cup \p[\tilde{T}] = \omega^\omega$ in every generic extension of $V$ by $\mathbb{P}$. A tree $T$ is called \emph{$\mathbb{P}$-absolutely complemented} if there is a tree $\tilde{T}$ such that the pair $(T,\tilde{T})$ is $\mathbb{P}$-absolutely complementing.

We say that a set of reals $A$ is \emph{$\mathbb{P}$-Baire} if $A = \p[T]$ for some $\mathbb{P}$-absolutely complemented tree $T$ on $\omega \times \Ord$. By Feng, Magidor, and Woodin \cite[Theorem 2.1]{FenMagWoo} a set of reals is universally Baire if and only if it is $\mathbb{P}$-Baire for every poset $\mathbb{P}$. We will adopt this characterization of universal Baireness as our definition from now on.

For a cardinal $\kappa$, we say that a set of reals is \emph{$\kappa$-universally Baire} if it is $\mathbb{P}$-Baire for every poset $\mathbb{P}$ of cardinality less than $\kappa$.  We denote the pointclass of all $\kappa$-universally Baire sets of reals by $\uB_\kappa$. It is not hard to see that if $\kappa$ is inaccessible, then a set of reals is $\kappa$-universally Baire if and only if it is $\Col(\omega,\mathord{<}\kappa)$-Baire.

If a set of reals $A$ is $\mathbb{P}$-Baire and $G \subset \mathbb{P}$ is a $V$-generic filter, then the \emph{canonical extension} of $A$ to $V[G]$ is the set of reals $A^{V[G]}$ in $V[G]$ defined by 
\[A^{V[G]} = \p[T]^{V[G]}\]
where $T$ is a $\mathbb{P}$-absolutely complemented tree in $V$ such that $A = \p[T]^V$.  By a standard argument using the absoluteness of wellfoundedness, this definition of the canonical extension does not depend on the choice of $T$.

For every positive integer $n$, all of the above definitions and facts about universally Baire sets of reals can easily be generalized to universally Baire $n$-ary relations on the reals by replacing trees on $\omega \times \Ord$ with trees on $\omega^n \times \Ord$.

Now we can establish some equivalent conditions for thinness. The equivalence of statements \ref{item:thin} and \ref{item:no-new-elements} below seems to be well-known (and perhaps the others are also) but we are not aware of a reference.
  
\begin{lem}\label{lem:thin-equivalences}
For every universally Baire set of reals $A$, the following statements are equivalent in ZFC.
\begin{enumerate}
 \item\label{item:thin} $A$ is thin.
 \item\label{item:no-new-elements} $A^{V[G]} = A^V$ for every generic extension $V[G]$ of $V$.
 \item\label{item:every-relation-uB} For every $n < \omega$, every subset of $A^n$ is universally Baire.
 \item\label{item:uB-wellordering} There is a universally Baire wellordering of $A$.
\end{enumerate}
\end{lem}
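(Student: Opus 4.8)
The plan is to prove the four statements equivalent by establishing $(1)\Leftrightarrow(2)$, the chain $(1)\Rightarrow(3)\Rightarrow(1)$, and $(1)\Leftrightarrow(4)$, working throughout with an absolutely complementing pair of trees $(T,\tilde T)$ with $\p[T]=A$ and taking the Mansfield--Solovay theorem as the main external input. The only genuinely laborious step will be $(1)\Rightarrow(3)$; the implication $(1)\Rightarrow(4)$ will then come for free, and $(3)\Rightarrow(1)$ and $(4)\Rightarrow(1)$ will be handled by a descriptive-set-theoretic argument using the Baire property.

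For $(2)\Rightarrow(1)$ I would argue contrapositively. If $A$ has a perfect subset $[S]\subseteq A$ with $S$ a perfect tree on $\omega$, then $[S]\cap\p[\tilde T]=\emptyset$ in $V$, so the tree searching for a common branch of $S$ and $\tilde T$ is wellfounded and remains so in every $V[G]$; hence $[S]^{V[G]}\subseteq\p[T]^{V[G]}=A^{V[G]}$. Forcing with $S$ then adds a branch $x\notin V$, so $x\in A^{V[G]}\setminus A^V$ and $A$ grows. For $(1)\Rightarrow(2)$ I would run Mansfield--Solovay inside $V[G]$: if $\p[T]^{V[G]}$ had a member outside $L[T]^{V[G]}=L[T]^V$ it would have a perfect subset in $V[G]$, but the Mansfield tree $T^{*}$ searching for a perfect subset of $\p[T]$ is built from $T$ and lies in $V$, so its illfoundedness reflects down to $V$ and contradicts thinness. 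Thus $A^{V[G]}\subseteq L[T]^V\subseteq V$, and since $x\in\p[T]$ is absolute for $x\in V$ this yields $A^{V[G]}=A^V$.

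The heart of the matter is $(1)\Rightarrow(3)$. First I would reduce to the case $n=1$ by showing each $A^n$ is again thin and universally Baire: products of uB relations are uB, and if $A^n$ had a perfect subset then some coordinate projection would be an uncountable analytic subset of $A$, which by the perfect set property for analytic sets would give a perfect subset of $A$. So it suffices to show that every subset $B$ of a thin uB set $C=\p[T]$ is uB. Since $C$ is thin, Mansfield--Solovay gives $C\subseteq L[T]$, so each $c\in C$ carries a canonical rank $\rho(c)$ in the $L[T]$-wellordering of the reals; set $R=\rho[B]$. I would build a tree $S_B$ on $\omega\times\Ord$ whose branches over a real $x$ certify both that $x\in C$ (via $T$) and that $\rho(x)\in R$, the latter by constructing along the branch a wellfounded initial segment of $L[T]$ that computes the rank and retaining only nodes whose guessed rank lies in $R$; the complementing tree is built symmetrically from $\tilde T$ and $\Ord\setminus R$. \emph{Verifying that this pair absolutely complements is the main obstacle}: it relies on $L[T]^{V[G]}=L[T]^V$, on absoluteness of wellfoundedness for the rank-certificates, and crucially on the fact (from $(1)\Leftrightarrow(2)$) that $C$ does not grow, so that no new real can acquire a spurious rank in $R$. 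Granting $(1)\Rightarrow(3)$, statement $(4)$ is immediate, since any wellordering of $A$ exists by choice and is a subset of $A^2$, hence uB by $(3)$ with $n=2$; and $(3)\Rightarrow(1)$ is easy, since if $A$ had a perfect subset $P\cong 2^\omega$ then a subset of $P$ without the Baire property (which exists in ZFC) would lie in $A$ and be uB by $(3)$, contradicting that uB sets have the Baire property.

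It remains to prove $(4)\Rightarrow(1)$. Suppose $<_A$ is a uB wellordering of $A$ and, toward a contradiction, that $A$ contains a copy of $2^\omega$ carrying the restricted relation $W=\{(x,y):x<_A y\}$; being uB, $W$ has the Baire property in $(2^\omega)^2$. Since $W$ and its flip $W^{-1}$ are disjoint, interchanged by the coordinate swap, and cover the plane modulo the meager diagonal, both are nonmeager, so $W$ is comeager on some basic clopen box $B_0\times B_1$, which I may take with $B_0,B_1$ disjoint (a wellordering cannot be comeager on a diagonal box). I would then recurse: inside any such box the Kuratowski--Ulam theorem lets me pick a point of $B_1$ all but meagerly many of whose $<_A$-predecessors lie in $B_0$, shrink into a sub-box of $B_0$, and iterate with vanishing diameters, producing an infinite $<_A$-descending sequence and contradicting wellfoundedness. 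This argument uses the Baire property of $W$ only through wellfoundedness, consistent with the existence of Baire-property \emph{linear} orders such as the lexicographic order, and it closes the cycle.
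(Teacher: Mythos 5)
Your handling of \eqref{item:thin}$\iff$\eqref{item:no-new-elements} (Mansfield--Solovay in $V[G]$, with the witnessing perfect tree lying in $L[T]\subset V$; forcing with a perfect subtree to add a new element), of \eqref{item:every-relation-uB}$\implies$\eqref{item:uB-wellordering} (a wellordering of $A$ is a subset of $A^2$), and of \eqref{item:uB-wellordering}$\implies$\eqref{item:thin} (the Kuratowski--Ulam descending-sequence argument) is correct and essentially identical to the paper's, which runs the cycle \eqref{item:thin}$\implies$\eqref{item:no-new-elements}$\implies$\eqref{item:every-relation-uB}$\implies$\eqref{item:uB-wellordering}$\implies$\eqref{item:thin} and quotes Kanamori for the last step. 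Your additional implication \eqref{item:every-relation-uB}$\implies$\eqref{item:thin} via a subset of a perfect set lacking the Baire property is also fine.

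The genuine gap is exactly where you place it: the implication \eqref{item:thin}$\implies$\eqref{item:every-relation-uB}, whose key verification you defer as ``the main obstacle'' and never close. Moreover, the construction you sketch cannot work as stated. A branch of a tree on $\omega\times\Ord$ is an $\omega$-sequence of ordinals and can only code a countable object, whereas any initial segment $L_\beta[T]$ of $L[T]$ that computes the rank $\rho(x)$ contains $T$ itself and is typically of size at least the continuum; so ``constructing along the branch a wellfounded initial segment of $L[T]$'' is not literally possible. If you instead code countable hulls, you hit the failure of condensation for $L[T]$: the transitive collapse of a hull of $L_\beta[T]$ is some $L_{\bar\beta}[\bar T]$ with $\bar T\ne T$, and conversely a countable wellfounded model of the appropriate theory with a predicate for ``$T$'' need not re-embed into $L[T]$ at all, so branches can certify spurious membership or spurious ranks. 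This is precisely why the Shoenfield tree construction works over $L$ but does not relativize to $L[T]$ for an arbitrary large tree $T$. In short, the deferred verification is not a routine check; as designed, both the positive tree and its proposed complement can project to the wrong sets.

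The idea you are missing---and the reason the paper's proof of \eqref{item:no-new-elements}$\implies$\eqref{item:every-relation-uB} is short---is that the trees need not \emph{compute} anything: they may simply hard-code enumerations. Once \eqref{item:no-new-elements} holds, each of the sets $B\subset A^n$ and $A^n\setminus B$ consists of ground-model reals and acquires no new elements in any generic extension, so the trivial tree obtained from an enumeration $B=\{x_i : i<\left|B\right|\}$ (pairing initial segments of $x_i$ with the constant sequence $i$) projects to exactly $B$ in every generic extension, and likewise for $A^n\setminus B$. For the complementing tree one uses the decomposition of the complement of $B$ as the union of the complement of $A^n$ and the set $A^n\setminus B$: the first piece is the projection, in every extension, of a tree $T_{\neg A^n}$ built from the universally Baire complement tree $\tilde T$ for $A$ (this again uses \eqref{item:no-new-elements}, so that $\p[\tilde T]^{V[G]}$ is the complement of the ground-model set $A$), and the second piece has a trivial tree. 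Amalgamating these two trees gives the absolutely complementing partner of the trivial tree for $B$. No ranks, no $L[T]$-wellordering, and no condensation are needed; your rank machinery is an attempt to produce definable certificates where none exist and none are required.
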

\begin{proof}
 \eqref{item:thin} $\implies$ \eqref{item:no-new-elements}: Assume that statement  \ref{item:no-new-elements} fails, so there is a generic extension $V[G]$ of $V$ by a poset $\mathbb{P}$ and a real $x \in A^{V[G]} \setminus V$.  Letting $(T,\tilde{T})$ be a $\mathbb{P}$-absolutely complementing pair of trees for $A$ in $V$ we have $x \in \p[T]^{V[G]} \setminus L[T]$, so by Mansfield's theorem in $V[G]$ (see Jech \cite[Lemma 25.24]{Jec2002}) there is a perfect tree $U \in L[T]$ on $\omega$ such that $[U] \subset \p[T]$ in $V[G]$. This implies $[U] \subset A$ in $V$, so $A$ is not thin.
 
 \eqref{item:no-new-elements} $\implies$ \eqref{item:every-relation-uB}:
 Assume statement \ref{item:no-new-elements}. Then there is a tree $T_{\neg A}$ on $\omega \times \Ord$ such that
 \[{} \forces_{\mathbb{P}} \p[T_{\neg A}] = \omega^\omega \setminus A.\]
 From this tree, one can define a tree $T_{\neg A^n}$ on $\omega^n \times \Ord$ such that
 \[{} \forces_{\mathbb{P}} \p[T_{\neg A^n}] = (\omega^\omega)^n \setminus A^n.\]
 Now let $n < \omega$, let $B \subset A^n$, and let $\mathbb{P}$ be a poset. We will show that $B$ is $\mathbb{P}$-Baire. Take trees $T_B$ and $T_{A^n \setminus B}$ on $\omega^n \times \left|B\right|$ and $\omega^n \times \left|A^n \setminus B\right|$ respectively
 that project to  $B$ and $A^n \setminus B$ respectively in every generic extension.  (Such trees can be trivially defined for every pointset.) From $T_{\neg A^n}$ and $T_{A^n \setminus B}$, one can define a tree $T_{\neg B}$ on $\omega^n \times \Ord$ 
 such that every generic extension satisfies $\p[T_{\neg B}] = \p[T_{\neg A^n}] \cup \p[T_{A^n \setminus B}]$. Then we have
 \[{} \forces_{\mathbb{P}} \p[T_{\neg B}] = (\omega^\omega)^n \setminus B,\]
 so the pair of trees $(T_B, T_{\neg{B}})$ witnesses that $B$ is $\mathbb{P}$-Baire.

 \eqref{item:every-relation-uB} $\implies$ \eqref{item:uB-wellordering}: This follows directly from the existence of a wellordering of $A$.

 \eqref{item:uB-wellordering} $\implies$ \eqref{item:thin}: Suppose toward a contradiction that some universally Baire set of reals $A$ has a universally Baire wellordering but is not thin.  Because $A$ has a perfect subset there is a continuous injection $f : 2^\omega \to \omega^\omega$ whose range is contained in $A$. Taking the preimage of a universally Baire wellordering of $A$ under the continuous function $f \times f$ we obtain a wellordering of $2^\omega$ with the Baire property, which leads to a contradiction using the Kuratowski--Ulam theorem (see Kanamori \cite[Corollary 13.10]{KanHigherInfinite}.)
\end{proof}

\section{Proof of Theorem \ref{thm:v-Shelah}}\label{section:proof-of-thm-v-Shelah}
 
The following ``universally Baire reflection'' lemma is the key to obtaining consequences of the virtual Shelah property by forcing. Our statement of the lemma will use the following definition.  For a $V$-generic filter $G$ on the poset $\Col(\omega,\mathord{<}\kappa)$ and an ordinal $\alpha < \kappa$ we define $G \restriction \alpha = G \cap \Col(\omega,\mathord{<}\alpha)$, which is a $V$-generic filter on the poset $\Col(\omega,\mathord{<}\alpha)$.

\begin{lem}\label{lem:forcing}
 Let $\kappa$ be a virtually Shelah cardinal, let $G \subset \Col(\omega,\mathord{<}\kappa)$ be a $V$-generic filter, and let $A$ be a universally Baire set of reals in $V[G]$. Then there is an ordinal $\alpha < \kappa$ and a $\kappa$-universally Baire set of reals $A_0$ in $V[G\restriction\alpha]$ such that $A = A_0^{V[G]}$.
\end{lem}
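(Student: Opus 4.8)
The plan is to reflect the universally Baire set $A$ from $V[G]$ down to an initial segment of the collapse by lifting a virtual Shelah embedding through the forcing $\Col(\omega,\mathord{<}\kappa)$. Using Lemma \ref{lem:zfc-models} I would first fix an inaccessible $\lambda > \kappa$, a transitive model $M \models$ ZFC with $V_\lambda \subset M$, and a generic elementary embedding $j : V_\lambda \to M$ with $\crit(j) = \kappa$ and $\kappa < \lambda < j(\kappa)$, where the function $f$ fed into Lemma \ref{lem:zfc-models} is chosen to bound the heights of the trees witnessing $\kappa$-universal Baireness in the relevant intermediate extensions (this is where the extra clause $j(f)(\kappa) \le \lambda$, i.e.\ the Shelah-like strength, is used; mere inaccessibility would not suffice). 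Since $\Col(\omega,\mathord{<}\kappa) \in V_\lambda$ and its conditions lie below $\crit(j) = \kappa$, we have $j \restriction \Col(\omega,\mathord{<}\kappa) = \id$ and $j(\Col(\omega,\mathord{<}\kappa)) = \Col(\omega,\mathord{<}j(\kappa))^M$; so, working in a further generic extension and adjoining a generic $G'$ for the tail poset $\Col(\omega,[\kappa,j(\kappa)))^M$ over $M[G]$, I would lift $j$ to a generic elementary embedding
\[
 j^+ : V_\lambda[G] \to M[G^+], \qquad G^+ = G * G',
\]
with $j^+(G) = G^+$ and $j^+ \restriction V_\lambda = j$.

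Two soft facts organize the argument. First, because $\lambda$ is inaccessible and every real of $V[G]$ appears in some $V[G\restriction\beta]$ with $\beta < \kappa < \lambda$, the models $V_\lambda[G]$ and $V[G]$ have the same reals; consequently $A \subseteq (\omega^\omega)^{V_\lambda[G]}$, and $\crit(j^+) = \kappa > \omega$ forces $j^+$ to fix every real, so $A = j^+(A) \cap (\omega^\omega)^{V_\lambda[G]}$ once $A$ is located inside $V_\lambda[G]$. Second, since $\alpha < \kappa$ implies that $\kappa$ remains inaccessible in $V[G\restriction\alpha]$, the target conclusion is equivalent to producing $\alpha < \kappa$ together with a $\Col(\omega,\mathord{<}\kappa)$-absolutely complemented pair of trees $(T_0,\tilde T_0) \in V[G\restriction\alpha]$ whose projection satisfies $\p[T_0]^{V[G]} = A$ --- this reformulation uses the remark from the excerpt that over a model in which $\kappa$ is inaccessible, $\kappa$-universal Baireness coincides with $\Col(\omega,\mathord{<}\kappa)$-Baireness, together with the fact that the tail of the collapse is a direct limit of posets of size $< \kappa$, so that the canonical extension $A_0^{V[G]}$ is well defined.

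The heart of the proof is then a reflection below $\crit(j^+) = \kappa$. In $M[G^+]$ the set $j^+(A)$ is universally Baire, and since $M[G] = M[G^+ \restriction \kappa]$ with $\kappa < j(\kappa)$, I would verify that the witnessing trees for $A$, pushed into $M[G]$, form a $\Col(\omega,\mathord{<}j(\kappa))$-absolutely complemented pair there whose canonical extension to $M[G^+]$ equals $j^+(A)$. Thus $M[G^+]$ satisfies the statement ``there exist an ordinal $\alpha < j(\kappa)$ and a $\Col(\omega,\mathord{<}j(\kappa))$-absolutely complemented pair of trees in $M[G^+\restriction\alpha]$ whose projection-extension to $M[G^+]$ is $j^+(A)$,'' witnessed by $\alpha = \kappa$. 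Applying the elementarity of $j^+$ (with $j^+(\kappa) = j(\kappa)$) reflects this to the assertion, true in $V_\lambda[G]$, that some $\alpha < \kappa$ and some $\Col(\omega,\mathord{<}\kappa)$-absolutely complemented pair in $V_\lambda[G\restriction\alpha]$ projects to $A$; finally the absoluteness of $\kappa$-universal Baireness and of the canonical extension between $V_\lambda[G\restriction\alpha]$ and $V[G\restriction\alpha]$ (both of which concern only posets of size $< \kappa < \lambda$) upgrades this to the desired $A_0 \in V[G\restriction\alpha]$ with $A_0^{V[G]} = A$.

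The main obstacle I anticipate is the interface between $A$ and the embedding: guaranteeing that $A$, together with a pair of trees witnessing its universal Baireness, is actually captured inside $V_\lambda[G]$ (equivalently inside $M[G]$) so that $j^+$ can act on it. This is precisely what the Shelah-type clause $j(f)(\kappa) \le \lambda$ is designed to deliver --- $f$ must be chosen in advance to bound the ordinal heights of the absolutely complementing trees attached to the sets that can arise, so that $\lambda$ is large enough to contain the relevant tree for $A$ while still satisfying $\lambda < j(\kappa)$. Getting this bounding function to work uniformly, and checking that the pushed-forward trees genuinely complement for all posets of size $< j(\kappa)$ in $M[G]$ rather than only for the smaller posets for which complementation was known in $V_\lambda[G]$, is the delicate point on which the argument turns.
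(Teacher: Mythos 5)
Your frame --- apply Lemma \ref{lem:zfc-models}, lift $j$ to a generic elementary embedding of $V_\lambda[G]$ into $M[H]$ through the Levy collapse, and compare $A$ with its image via trees --- is the same as the paper's, but your logical architecture is direct where the paper's is by contradiction, and that is exactly where your argument breaks. You want $M[G^+]$ to satisfy ``there exist $\alpha < j(\kappa)$ and a $\Col(\omega,\mathord{<}j(\kappa))$-absolutely complemented pair of trees in $M[G^+\restriction\alpha]$ whose canonical extension is $j^+(A)$,'' \emph{witnessed by $\alpha = \kappa$}, and then pull this back by elementarity. Since the statement is existential, you must actually verify the witness: you must produce, inside $M[G]$, a pair of trees that is $\Col(\omega,\mathord{<}j(\kappa))$-absolutely complementing and projects compatibly with $j^+(A)$. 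No such trees are available. If both trees used only ordinals below $j(\kappa)$, then after the full collapse both projections would be $\Sigma^1_1$ in a real coding the trees, hence Borel; this already fails when $A$ is a universal $\Pi^1_1$ set, so the complementing tree must use ordinals $\ge j(\kappa) > \lambda$. But $M$ agrees with $V$ only up to rank $\lambda$, so the trees in $V[G]$ witnessing this much universal Baireness of $A$ have no reason to lie in $M[G]$, and they cannot be collapsed below $\lambda$. Your proposed remedy --- choosing $f$ ``in advance to bound the ordinal heights of the absolutely complementing trees'' --- cannot close this gap: $f$ maps $\kappa$ to $\kappa$, so $j(f)(\kappa) < j(\kappa)$ always, while even the trees witnessing plain $\kappa$-universal Baireness (e.g.\ Shoenfield trees for $\Pi^1_1$ sets) have height $\ge \kappa$, and the ones your reflected statement needs have height $\ge j(\kappa)$. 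The clause $j(f)(\kappa) \le \lambda$ bounds one ordinal below $\lambda$; it cannot place level-$j(\kappa)$ trees inside $M[G]$. Proving the reflected existential statement in $M[G^+]$ outright is essentially the lemma itself transferred to $M$, which is why a direct argument of this shape is circular.

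The paper avoids this by assuming the conclusion fails and refuting that. From the failure hypothesis, the $\kappa$-chain condition of $\Col(\omega,\mathord{<}\kappa)$ yields a function $f \in V$ such that for every $\alpha < \kappa$, no set in $\uB^{V[G\restriction\alpha]}_{f(\alpha)}$ agrees with $A$ on the reals of $V[G\restriction f(\alpha)]$; the virtual Shelah property is applied to \emph{this} $f$, giving $\beta = j(f)(\kappa) < \lambda$. The elementary image of the failure statement, instantiated at $\alpha = \kappa$ (a universally quantified statement, so instantiation costs nothing), says no set in $\uB^{M[G]}_\beta$ extends to agree with $\jhat(A)$ on $M[H\restriction\beta]$. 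Now only $\Col(\omega,\mathord{<}\beta)$-absolutely complementing trees for $A$ are needed to contradict this, and since $\beta < \lambda$ such trees can be collapsed to cardinality $\beta$ and hence placed in $V_\lambda[G] \subseteq M[G]$; pointwise images of branches then show $A$'s extension agrees with $\jhat(A)$ on the reals of $M[H\restriction\beta]$, a contradiction. In short, the contradiction hypothesis is what converts ``universally Baire in $V[G]$'' into data of size $< \lambda$ that $M$ can see; no function $f$ chosen ``in advance'' can do this for your direct argument. (Your two soft facts and the lifting of $j$ are fine; the gap is solely in the verification of the reflected existential witness.)
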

\begin{proof}
 Suppose toward a contradiction that for every ordinal $\alpha < \kappa$ we have
 \[\forall A_0 \in \uB_\kappa^{V[G\restriction\alpha]}\, A_0^{V[G]} \ne A.\]
 Then for every ordinal $\alpha < \kappa$, because $\kappa$ is inaccessible in $V[G \restriction \alpha]$ and we have
 \[ \uB_\kappa^{V[G \restriction \alpha]} = \bigcap_{\beta < \kappa} \uB_\beta^{V[G \restriction \alpha]} \text{ and } \mathbb{R}^{V[G]} = \bigcup_{\beta < \kappa} \mathbb{R}^{V[G\restriction \beta]},\]
 it follows that for all sufficiently large ordinals $\beta < \kappa$ we have $\beta > \alpha$ and
 \[\forall A_0 \in \uB^{V[G\restriction\alpha]}_{\beta}\, A_0^{V[G \restriction \beta]} \ne A \cap V[G \restriction \beta].\]
 Then by the $\kappa$-chain condition for $\Col(\omega,\mathord{<}\kappa)$  it follows that there is a function $f : \kappa \to \kappa$ in $V$
 such that for all $\alpha < \kappa$ we have $f(\alpha) > \alpha$ and
 \[\forall A_0 \in \uB^{V[G\restriction\alpha]}_{f(\alpha)}\, A_0^{V[G \restriction f(\alpha)]} \ne A \cap V[G \restriction f(\alpha)].\]
 Because $\kappa$ is a limit of inaccessible cardinals in $V$ we may additionally assume that $f(\alpha)$ is inaccessible in $V$ for every ordinal $\alpha < \kappa$.
   
 Because $\kappa$ is virtually Shelah, by Lemma \ref{lem:zfc-models} there is an inaccessible cardinal $\lambda > \kappa$, a transitive model $M$ of ZFC in $V$ with $V_\lambda \subset M$, and a generic elementary embedding 
 \[ j : V_{\lambda} \to M \text{ with } \crit(j) = \kappa \text{ and } j(f)(\kappa) < \lambda < j(\kappa).\]
 Defining $\beta = j(f)(\kappa)$ we have 
 \[\kappa < \beta < \lambda < j(\kappa) < j(\beta) < \Ord^M.\]
 Note that $\beta$ is inaccessible in $M$ by our assumption on $f$ and the elementarity of $j$, and because $V_\lambda \subset M$ and $\beta < \lambda$ this implies that $\beta$ is also inaccessible in $V$.

 We can extend $j$ to a generic elementary embedding
 \[\jhat : V_{\lambda}[G] \to M[H]\]
 where $H\subset \Col(\omega,\mathord{<}j(\kappa))$ is an $V$-generic filter such that $H \restriction \kappa = G$. By the fact that $\beta = j(f)(\kappa)$ and the elementarity of $\jhat$ it follows that
 \[\forall A_0 \in \uB^{M[G]}_{\beta}\,A_0^{M[H \restriction \beta]} \ne \jhat(A) \cap M[H \restriction \beta].\]
 
 We will now obtain a contradiction by showing
 \[A \in \uB^{M[G]}_{\beta} \text{ and } A^{M[H \restriction \beta]} = \jhat(A) \cap M[H \restriction \beta].\]
 In $V[G]$, because $A$ is universally Baire we may take a $\Col(\omega,\mathord{<}\beta)$-absolutely complementing pair of trees $(T,\tilde{T})$ on $\omega \times \Ord$ such that $\p[T] = A$. We may assume that $(T,\tilde{T}) \in V_{\lambda}[G]$: if necessary, replace $T$ and $\tilde{T}$ by their images under the transitive collapse of an elementary substructure of a sufficiently large rank initial segment of $V[G]$ containing $V_\beta[G] \cup \{\beta,T,\tilde{T}\}$ and having cardinality $\beta$. 
 Because $V_\lambda \subset M$ we have $V_\lambda[G] \subset M[G]$ and the pair $(T,\tilde{T}) \in V_{\lambda}[G]$ witnesses $A \in \uB^{M[G]}_{\beta}$ as desired.
 
 We have $(\jhat(T),\jhat(\tilde{T})) \in M[H]$ and it follows by the elementarity of $\jhat$ that \[\p[\jhat(T)]^{M[H]} = \jhat(A).\]
 Mapping branches pointwise by $j$ gives the inclusions
 \[\p[T] \subset \p[\jhat(T)] \text{ and } \p[\tilde{T}] \subset \p[\jhat(\tilde{T})]\]
 in all models containing all of these trees, and particularly in $M[H]$. Because the pair $(T,\tilde{T})$ is $\Col(\omega,\mathord{<}\beta)$-absolutely complementing in $V[G]$ and the pair $(\jhat(T),\jhat(\tilde{T}))$ is complementing in $M[H]$, these inclusions become equalities when restricted to the reals of $M[H \restriction \beta]$, so
 \[A^{M[H \restriction \beta]} = \p[T]^{M[H \restriction \beta]} = \p[\jhat(T)]^{M[H \restriction \beta]} = \jhat(A) \cap M[H \restriction \beta],\]
 completing the desired contradiction.
\end{proof}
 
We can use Lemma \ref{lem:forcing}  to show that the consistency of statement \ref{item:equi-v-Shelah} of Theorem \ref{thm:v-Shelah} (which says there is a virtually Shelah cardinal) implies the consistency of statements \ref{item:omega-1-uB-sets}, \ref{item:equi-L-R-uB-LM}, \ref{item:equi-L-R-uB-PSP}, and the corresponding statement for the Baire property:
 
\begin{prop}\label{prop:forward-direction}
 Let $\kappa$ be a virtually Shelah cardinal and let $G \subset \Col(\omega,\mathord{<}\kappa)$ be a $V$-generic filter.
 Then the following statements hold in $V[G]$:
 \begin{itemize}
  \item $\left|\uB\right| = \omega_1$.
  \item Every set of reals in $L(\uB, \mathbb{R})$ is Lebesgue measurable and has the Baire property and the perfect set property.
 \end{itemize}
\end{prop}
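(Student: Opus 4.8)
The plan is to use the reflection lemma (Lemma \ref{lem:forcing}) to show that in $V[G]$ the pointclass $\uB$ is both small and ordinal-definable, and then to reduce the regularity statements to Solovay's theorem on the Levy collapse. Throughout, recall that since $\kappa$ is inaccessible and $\Col(\omega,\mathord{<}\kappa)$ has the $\kappa$-chain condition, in $V[G]$ we have $\kappa = \omega_1$, and since every real lies in some $V[G\restriction\alpha]$ with $\alpha < \kappa$ and each such model has fewer than $\kappa$ reals, $2^{\aleph_0} = \kappa = \omega_1$. The first thing I would record is that every universally Baire set in $V[G]$ is coded by a real: by Lemma \ref{lem:forcing} any $A \in \uB^{V[G]}$ has the form $A = A_0^{V[G]} = \p[T]^{V[G]}$ where $A_0$ is $\kappa$-universally Baire in some $V[G\restriction\alpha]$ with $\alpha < \kappa$, witnessed by a $\Col(\omega,\mathord{<}\kappa)$-absolutely complementing pair $(T,\tilde{T})$; by inaccessibility of $\kappa$ in $V[G\restriction\alpha]$ we may take $T,\tilde{T} \in V_\kappa^{V[G\restriction\alpha]}$, which is collapsed to be hereditarily countable in $V[G]$, so the pair is coded by a real there. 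This yields the cardinality computation at once: there are at most $2^{\aleph_0} = \omega_1$ such codes, so $\left|\uB\right| \le \omega_1$, while $\left|\uB\right| \ge \omega_1$ because already the $2^{\aleph_0} = \omega_1$ many Borel sets are universally Baire; hence $\left|\uB\right| = \omega_1$.

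The crux is to show that $\uB^{V[G]}$ is \emph{ordinal-definable} in $V[G]$. Using the observation above, I would characterize $\uB^{V[G]}$ as the set of all $\p[T]^{V[G]}$ where $T$ ranges over real codes of countable trees belonging to a disjoint pair $(T,\tilde{T})$ that remains complementing in the further extension of $V[G]$ by $\Col(\omega,\omega_1)$. The point is that both $V[G]$ and any $\Col(\omega,\omega_1)$-extension of it are $\Col(\omega,\mathord{<}\kappa)$-generic extensions of the hereditarily countable coding model $V[G\restriction\alpha]$, since the tail of the Levy collapse absorbs $\Col(\omega,\omega_1)$; so by the homogeneity of the collapse this ``remains complementing'' condition is equivalent, via Lemma \ref{lem:forcing}, to genuine $\kappa$-universal Baireness over $V[G\restriction\alpha]$ and hence to universal Baireness in $V[G]$. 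The condition ``remains complementing after $\Col(\omega,\omega_1)$'' refers only to the forcing relation of a single poset and is definable in $V[G]$ from the ordinal $\omega_1$ together with the coding real, so the resulting class $\uB^{V[G]}$ is ordinal-definable.

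Granting this, the regularity statements follow quickly. Since $\uB^{V[G]}$ is ordinal-definable and each of its members is coded by a real, the inner model $L(\uB,\mathbb{R})^{V[G]}$ is definable in $V[G]$ from $\mathbb{R}$ and ordinals, and therefore every set of reals in $L(\uB,\mathbb{R})^{V[G]}$ is definable in $V[G]$ from finitely many reals and ordinals, i.e.\ is ordinal-definable from a real. By Solovay's theorem, every set of reals that is ordinal-definable from a real in the Levy collapse extension $V[G]$ is Lebesgue measurable and has the Baire property and the perfect set property, which is exactly the second bullet.

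I expect the main obstacle to be the crux step: converting the model-relative description of universal Baireness supplied by Lemma \ref{lem:forcing} (``canonical extension of a $\kappa$-universally Baire set of some $V[G\restriction\alpha]$'') into a single condition that can be evaluated uniformly in $V[G]$ without naming the intermediate stage $\alpha$. This is where the homogeneity of $\Col(\omega,\mathord{<}\kappa)$ and the hereditary countability of the witnessing trees in $V[G]$ are essential, so that absolute complementation of a pair of trees over an intermediate model is faithfully detected by its persistence through one further homogeneous collapse computed inside $V[G]$.
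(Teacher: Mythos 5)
Your proposal has a fatal gap at its foundation: the claim that the witnessing pair $(T,\tilde{T})$ for a $\kappa$-universally Baire set $A_0 \in V[G\restriction\alpha]$ can be taken inside $V_\kappa^{V[G\restriction\alpha]}$, so that in $V[G]$ every universally Baire set becomes the projection of a tree coded by a real. Inaccessibility of $\kappa$ gives no such smallness: a Skolem-hull argument (as in the paper's proof of Lemma \ref{lem:forcing}) can bring witnessing trees down to size $\kappa$, but not below $\kappa$. Indeed the claim is outright false. If $T$ and $\tilde{T}$ are both countable in $V[G]$ (equivalently, coded by a real $c$), disjoint, and complementing in $V[G]$, then $\p[T]$ and $\p[\tilde{T}]$ are complementary $\Sigma^1_1(c)$ sets, so $\p[T]$ is Borel by Suslin's theorem. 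Hence your claim would force every universally Baire set in $V[G]$ to be Borel, contradicting the fact that every $\Pi^1_1$ set is universally Baire and some (e.g.\ a universal $\Pi^1_1$ set) are not Borel. One can see concretely where smallness breaks: in a $\Col(\omega,\mathord{<}\kappa)$-absolutely complementing pair for a $\Pi^1_1$ set, the tree projecting to that set must use ordinals cofinally in $\kappa$ (a Shoenfield-type tree on $\omega \times \kappa$), since a tree on $\omega \times \gamma$ with $\gamma < \kappa$ stops projecting correctly once $\gamma$ is collapsed and new reals code wellorders of rank above $\gamma$. Both your cardinality count (counting real codes) and your ``crux'' ordinal-definability characterization of $\uB^{V[G]}$ (projections of countable trees whose pairing survives $\Col(\omega,\omega_1)$) rest on this false claim; as written, your characterization captures only the Borel sets. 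A secondary error: it is not true that a $\Col(\omega,\omega_1)$-extension of $V[G]$ is again a $\Col(\omega,\mathord{<}\kappa)$-extension of $V[G\restriction\alpha]$, since forcing with $\Col(\omega,\omega_1^{V[G]})$ collapses $\kappa$ while $\Col(\omega,\mathord{<}\kappa)$ over $V[G\restriction\alpha]$ preserves $\kappa$ as $\omega_1$; the two-step forcing is equivalent to $\Col(\omega,\kappa)$, not to $\Col(\omega,\mathord{<}\kappa)$.

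The paper's proof avoids real codes entirely, and this is the repair you need. For the count, it parameterizes $\uB^{V[G]}$ by ground-model objects: by Lemma \ref{lem:forcing}, each $A \in \uB^{V[G]}$ is the canonical extension $A_0^{V[G]}$ of some $A_0 \in \uB_\kappa^{V[G\restriction\alpha]}$, and since canonical extensions do not depend on a choice of trees, $A$ is determined by the ordinal $\alpha$ together with a $\Col(\omega,\mathord{<}\alpha)$-name in $V_\kappa^V$ for $A_0$; as $V_\kappa^V$ has cardinality $\omega_1$ in $V[G]$, this gives $\left|\uB\right| = \omega_1$. For the regularity properties it does not reduce to ``ordinal-definable from a real'' at all: it observes that every set of reals in $L(\uB,\mathbb{R})^{V[G]}$ is definable in $V[G]$ from parameters lying in some intermediate model $V[G\restriction\alpha]$ (these parameters include $A_0$, a set of reals, not merely a real), and then invokes the \emph{proof} of Solovay's theorem, which applies to sets definable in a Levy-collapse extension from arbitrary parameters of the ground model of that collapse --- here $V[G]$ is a Levy-collapse extension of $V[G\restriction\alpha]$ by the collapse of the inaccessible $\kappa$. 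Your final appeal to Solovay's theorem for sets ordinal-definable from a real is a correct statement, but the route you take to put sets of $L(\uB,\mathbb{R})$ into that form cannot work.
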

\begin{proof}
 By Lemma \ref{lem:forcing} every universally Baire set of reals in $V[G]$ is definable in a uniform way from $V$, $G$, and elements of $V_\kappa$, namely an ordinal $\alpha < \kappa$ and a $\Col(\omega,\mathord{<}\alpha)$-name for a $\kappa$-universally Baire set of reals.  (Recall that the canonical extension of a $\kappa$-universally Baire set of reals does not depend on any choice of trees.) Because $V_\kappa$ of the ground model has cardinality $\omega_1$ in $V[G]$, it follows that $V[G]$ satisfies $\left|\uB\right| = \omega_1$.
 
 Now let $A$ be a set of reals in $L(\uB, \mathbb{R})^{V[G]}$. Then $A$ is ordinal-definable in $V[G]$ from a universally Baire set and a real. The real is in $V[G \restriction \alpha]$ for some $\alpha < \kappa$ by the inaccessibility of $\kappa$, and the universally Baire set is definable in $V[G]$ from parameters in $V[G \restriction \alpha]$ for some $\alpha < \kappa$ by Lemma \ref{lem:forcing}, so $A$ itself is definable in $V[G]$ from parameters in $V[G \restriction \alpha]$ for some $\alpha < \kappa$. Therefore the proof of Solovay \cite[Theorem 1]{SolEverySetMeasurable} shows that $A$ has the claimed regularity properties.
\end{proof}

Proceeding with the proof of Theorem \ref{thm:v-Shelah}, note that statement \ref{item:equi-L-R-uB-PSP} of the theorem, which says that every set of reals in $L(\uB, \mathbb{R})$ has the perfect set property, obviously implies statement \ref{item:equi-uB-PSP} of the theorem, which says that every universally Baire set of reals has the perfect set property. We will show that statements \ref{item:omega-1-uB-sets} and \ref{item:equi-L-R-uB-LM} also imply statement \ref{item:equi-uB-PSP}.

Assume that statement \ref{item:equi-uB-PSP} fails, so there is an uncountable thin universally Baire set $A$. Then by Lemma \ref{lem:thin-equivalences} every subset of $A$ is also universally Baire, so $\left|\uB\right| \ge 2^{\omega_1}$ and therefore statement \ref{item:omega-1-uB-sets} fails. Also by Lemma \ref{lem:thin-equivalences} there is a universally Baire wellordering of $A$, so $L(\uB, \mathbb{R}) \models \omega_1 \le \left|\mathbb{R}\right|$. Moreover, because every countable sequence of universally Baire sets is coded by a universally Baire set (and similarly for reals) we have $L(\uB, \mathbb{R}) \models \text{DC}$.  Therefore by Shelah \cite[Theorem 5.1B]{SheSolovaysInaccessible} the model $L(\uB, \mathbb{R})$ has a set of reals that is not Lebesgue measurable, so statement \ref{item:equi-L-R-uB-LM} fails.

Next we show that statement \ref{item:equi-uB-PSP} implies statement \ref{item:equi-combinatorial}:

\begin{lem}\label{lem:PSP-uB-implies-combinatorial}
 Assume that every universally Baire set of reals has the perfect set property. Then for every function $f: \omega_1 \to \omega_1$ there is an ordinal $\lambda > \omega_1$ such that for a stationary set of $\sigma \in \powerset_{\omega_1}(\lambda)$ we have $\sigma \cap \omega_1 \in \omega_1$ and $\otp(\sigma) \ge f(\sigma \cap \omega_1)$.
\end{lem}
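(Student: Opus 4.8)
The plan is to argue by contraposition: assuming the combinatorial statement fails for some $f : \omega_1 \to \omega_1$, I will build an uncountable thin universally Baire set and invoke the perfect set property for a contradiction.

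First I would convert the hypothesized failure into a boundedness statement about Skolem hulls. Since the nonstationarity of the good $\sigma$ is witnessed, for each $\lambda$, by a club in $\powerset_{\omega_1}(\lambda)$, and clubs in $\powerset_{\omega_1}(\lambda)$ are exactly the closure sets of algebras on $\lambda$, I may fix a sufficiently large $\lambda$ and a wellordering $\vartriangleleft$ of $H_\lambda$ so that every countable elementary submodel $X \prec (H_\lambda; \in, \vartriangleleft, f)$ fails to be good. As $\omega_1 \in X$ forces $X \cap \omega_1 \in \omega_1$, this says precisely that $\otp(X \cap \Ord) < f(X \cap \omega_1)$ for every such $X$; equivalently, the transitive collapse of every such $X$ is a countable structure whose ordinal height is at most the \emph{countable} ordinal $f(X \cap \omega_1)$. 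This uniform bound on collapse heights is the engine of the construction.

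Next I would manufacture the set $A$ from these collapses. Coding the structure $P = (H_\lambda; \in, \vartriangleleft, f)$ as a relation on an ordinal, I would let $A$ consist of (canonically chosen) reals coding the countable wellfounded structures that elementarily embed into $P$, and represent $A$ as $\p[T]$ for a tree $T$ built by searching simultaneously for such a structure and for an embedding of it into $P$. Uncountability is then immediate, since club-many countable ordinals $\delta$ arise as $X \cap \omega_1$ and yield pairwise non-isomorphic collapses, hence distinct members of $A$. For thinness I would appeal to Lemma~\ref{lem:thin-equivalences}: it suffices to show that $A$ acquires no new member in any generic extension, and the height bound confines every collapse (in $V$ and in every extension) to the inner model built from $T$, so that a new member would, via Mansfield's theorem exactly as in the proof of Lemma~\ref{lem:thin-equivalences}, produce a perfect subset and hence a collapse violating the bound. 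Thus $A$ would be uncountable, thin, and universally Baire, contradicting the perfect set property.

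The hard part will be the two points on which thinness and universal Baireness both hinge. On the one hand, I must pin down the reals coding the collapses so that $A$ does not already contain a perfect set merely from the many codes of a single structure; on the other hand, I must produce the absolutely complementing tree witnessing that $A$ is universally Baire, that is, a tree whose branches certify non-embeddability into $P$ absolutely across generic extensions. Both are driven by the uniform bound $\otp(X \cap \Ord) < f(X \cap \omega_1) < \omega_1$, which keeps the relevant ordinal searches bounded and forbids the appearance of taller collapses after forcing; making this precise is the crux of the argument.
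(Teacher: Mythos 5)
Your overall strategy---assume the combinatorial statement fails for $f$, manufacture an uncountable thin universally Baire set of codes of wellfounded structures, and contradict the perfect set property via Lemma \ref{lem:thin-equivalences}---is the same as the paper's (the paper codes \emph{ordinals} in $\ran(f)$ rather than collapses of submodels of $H_\lambda$, which simplifies everything). But the two items you set aside as ``the hard part'' are not residual details: they are the entire content of the lemma, and your sketch of how to handle them breaks at both points. Concretely, the tree $T$ you describe, which searches for a countable structure together with an elementary embedding of it into $P$, projects to the set of \emph{all} reals coding embeddable structures, not to your set $A$ of canonically chosen codes. Since any fixed countably infinite structure has continuum many codes (permuting $\omega$ alone produces a perfect set of codes of the same structure), $\p[T]$ already contains a perfect set and so can never be thin; while if $A$ is one code per isomorphism type, then $A \ne \p[T]$ and you have exhibited no tree for $A$ at all. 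Making the ``canonical choice'' tree-representable in a generically absolute way is essentially asking for a universally Baire wellordering of an uncountable set of reals, which by Lemma \ref{lem:thin-equivalences} is circular. The paper's resolution is to take $A$ of size $\omega_1$, with one \emph{arbitrarily} chosen code $x_\beta$ for each $\beta \in \ran(f)$ ($f$ made strictly increasing), to represent $A$ by the trivial enumeration tree on $\omega \times \omega_1$, and to push all the work into the complementing tree, where the non-chosen codes of each $\beta$ are absorbed by a $\Sigma^1_1(x_\beta)$ tree.

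The second and more serious error is your claim that the uniform bound $\otp(X \cap \Ord) < f(X \cap \omega_1)$ ``forbids the appearance of taller collapses after forcing.'' It does not: if $\mathbb{P}$ collapses $\omega_1^V$ or $\left|H_\lambda\right|$, then in $V[G]$ the structure $P$ itself, and its elementary submodels that are uncountable in $V$, become countable, and their collapses have height at least $\omega_1^V$, above every value of $f$. These new codes lie in the complement of $A$ and must be \emph{positively captured} by the complementing tree (this is needed both for universal Baireness and for condition \ref{item:no-new-elements} of Lemma \ref{lem:thin-equivalences}); they cannot be forbidden. This is exactly why the paper's proof, for each poset $\mathbb{P}$, invokes the hypothesized failure at \emph{every} ordinal $\lambda \in [\omega_1, \eta)$ with $\eta = \max(|\mathbb{P}|^+, \omega_1)$, fixes for each such $\lambda$ a function $g_\lambda$ all of whose closure points are ``bad,'' and builds trees $\tilde{T}_{3,\lambda}$ certifying in $V[G]$ that a real codes the order type of some $g_\lambda$-closed set---for instance $\lambda$ itself, once $\lambda$ has become countable. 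A single fixed $\lambda$, as in your setup, cannot serve all posets. Finally, you have no analogue of the paper's trick of making $f$ strictly increasing and arranging that $\sigma \cap \omega_1$ be $f$-closed, which guarantees that order types of $g_\lambda$-closed sets avoid $\ran(f)$; without such a mechanism, a co-tree certifying ``is a collapse of a club-constrained submodel'' would swallow the elements of $A$ themselves, since those are also collapses. Repairing these gaps essentially forces you to reconstruct the paper's argument.
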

\begin{proof}
 Assume toward a contradiction that some function $f: \omega_1 \to \omega_1$ fails to have this property. For every ordinal $\lambda > \omega_1$ the set $\{\sigma \in \powerset_{\omega_1}(\lambda) : \sigma \cap \omega_1 \in \omega_1\}$ is always club in $\powerset_{\omega_1}(\lambda)$, so it follows from our assumption that the set of all $\sigma \in \powerset_{\omega_1}(\lambda)$ such that $\sigma \cap \omega_1 \in \omega_1$ and $\otp(\sigma) < f(\sigma \cap \omega_1)$ contains a club set in $\powerset_{\omega_1}(\lambda)$. (Note that this statement trivially holds for $\lambda = \omega_1$ also.) Increasing the values of $f$ if necessary, we may assume that $f$ is a strictly increasing function.
 
 We may consider every real $x$ as coding a structure $(\omega; E_x)$ where $E_x$ is a binary relation on $\omega$.  More precisely, for all $m,n \in \omega$ we let $(m,n) \in E_x$ if and only if $x(\langle m,n \rangle) = 0$ where the angle brackets denote a recursive pairing function. For every countable ordinal $\beta$ we may choose a real $x_\beta$ coding $\beta$ in the sense that $(\omega; E_{x_\beta}) \cong (\beta, \in)$.
 We claim that the set of reals 
 \[ A = \{x_\beta : \beta \in \ran(f)\}, \] 
 which is uncountable, is also universally Baire and thin.  This will contradict our assumption that every universally Baire set of reals has the perfect set property.
 
 Because $\left|A\right| = \omega_1$ we may trivially define a tree $T$ on $\omega \times \omega_1$ from $A$ such that $\p[T] = A$ in every generic extension of $V$. To prove the claim, it will therefore suffice to show that for every poset $\mathbb{P}$ there is a tree $\tilde{T}$ on $\omega \times \Ord$ such that the pair $(T,\tilde{T})$ is $\mathbb{P}$-absolutely complementing, which for this trivial tree $T$ simply means 
 \[ {} \forces_{\mathbb{P}} \p[\tilde{T}] = \omega^\omega \setminus A.\]
 This will show that $A$ is universally Baire by definition, and also that $A$ is thin by condition \ref{item:no-new-elements} of Lemma \ref{lem:thin-equivalences}.  (Our assumption on $f$ is only needed to prove universal Baireness of $A$.  Thinness of $A$ follows from the more general fact that the set $\{x_\beta : \beta <\omega_1\}$ is thin, which can be proved using the boundedness lemma; see Jech \cite[Corollary 25.14]{Jec2002}.)
 
 Fix a poset $\mathbb{P}$ and let $\eta = \max(|\mathbb{P}|^+,\omega_1)$. Using our hypothesis on the existence of club sets, for every ordinal $\lambda$ in the interval $[\omega_1,\eta)$ we may choose a function 
 \[g_\lambda : \lambda^{\mathord{<}\omega} \to \lambda\]
 such that for every $g_\lambda$-closed set $\sigma \in \powerset_{\omega_1}(\lambda)$ we have $\sigma \cap \omega_1 \in \omega_1$ and $\otp(\sigma) < f(\sigma \cap \omega_1)$. (If $\mathbb{P}$ is countable then this interval is empty and there is nothing to do in this step.) Moreover, we can choose $g_\lambda$ to satisfy the additional property that for every $g_\lambda$-closed set $\sigma \in \powerset_{\omega_1}(\lambda)$ the set $\sigma \cap \omega_1$ is $f$-closed. Then because $f$ is strictly increasing, these properties imply that for every $g_\lambda$-closed set $\sigma \in \powerset_{\omega_1}(\lambda)$ the order type of $\sigma$ is not in the range of $f$.
 
 As a first step in defining the tree $\tilde{T}$, note that there is a tree $\tilde{T}_1$ on $\omega \times \omega$ in $V$ such that in every generic extension of $V$ we have 
 \[ \p[\tilde{T}_1] = \{x \in \omega^\omega: (\omega;E_x) \text{ is not a well-ordering}\},\]
 because this set of reals is $\Sigma^1_1$.
 
 As a second step in defining $\tilde{T}$, note that for every ordinal $\beta < \omega_1^V$ there is a tree $\tilde{T}_{2,\beta}$ on $\omega \times \omega$ in $V$ such that in every generic extension of $V$ we have
 \[\p[\tilde{T}_{2,\beta}] = \{x \in \omega^\omega : (\omega;E_x) \cong (\beta;\mathord{\in}) \text{ and } x \notin A\},\]
 because this set of reals is $\Sigma^1_1(x_\beta)$ where $x_\beta$ is our chosen code of $\beta$.
 (The condition $x \notin A$ in the definition of this set either subtracts the single point $x_\beta$ from the set or leaves it unchanged, according to whether or not $\beta$ is in the range of $f$.)
 
 As a third step in defining $\tilde{T}$, note that for every ordinal $\lambda \in [\omega_1^V, \eta)$ there is a tree $\tilde{T}_{3,\lambda}$ on $\omega \times \lambda$ in $V$ such that in every generic extension of $V$ we have
 \[\p[\tilde{T}_{3,\lambda}] = \{x \in \omega^\omega : (\omega;E_x) \cong (\sigma;\mathord{\in}) \text{ for some $g_\lambda$-closed set $\sigma \in \powerset_{\omega_1}(\lambda)$}\}.\]
 To show this, it suffices to represent the set of all such reals $x$ as the projection of a closed subset of $\omega^\omega \times \lambda^\omega$ onto its first coordinate.  Using a definable pairing function $\lambda \times \omega \cong \lambda$, it equivalently suffices to represent the set of all such reals $x$ as the projection of a closed subset of $\omega^\omega \times \lambda^\omega \times \omega^\omega$ onto its first coordinate. An example of such a closed set is the set of all triples $(x, h, y) \in \omega^\omega \times \lambda^\omega \times \omega^\omega$ such that $h$ is an embedding $(\omega; E_x) \to (\lambda;\mathord{\in})$ and $y$ is a function telling us how far we have to look ahead in order to verify that the range of $h$ is $g_\lambda$-closed. To be precise, we require $g_\lambda[ \{ h(i) : i < n \}^{\mathord{<}n} ] \subset \{ h(i) : i < y(n) \}$ for all $n < \omega$.
 
 Now we can define a tree $\tilde{T}$ on $\omega \times \eta$ in $V$ as an amalgamation of these trees, so in every generic extension of $V$ we have
 \[\p[\tilde{T}] = \p[\tilde{T}_1] \cup \bigcup_{\beta < \omega_1^V} \p[\tilde{T}_{2,\beta}] \cup \bigcup_{\lambda \in [\omega_1^V, \eta)} \p[\tilde{T}_{3,\lambda}].\]
 Let $G \subset \mathbb{P}$ be a $V$-generic filter and let $x$ be a real in $V[G]$. We want to show 
 \[x \in A \iff x \notin \p[\tilde{T}].\]
 
 Assume that $x \in A$. Then clearly we have $x \notin \p[\tilde{T}_1]$ and $x \notin \p[\tilde{T}_{2,\beta}]$ for all $\beta < \omega_1^V$. By the definition of $A$ we have $x = x_\beta$ for some $\beta$ in the range of $f$. Now let $\lambda \in [\omega_1^V, \eta)$. As previously noted, the order type of a $g_\lambda$-closed set $\sigma \in \powerset_{\omega_1}(\lambda)^V$ cannot be in the range of $f$, so we have $x \notin \p[\tilde{T}_{3,\lambda}]$ in $V$ and equivalently in $V[G]$. Therefore $x \notin \p[\tilde{T}]$.
 
 Conversely, assume that $x \notin A$. If $(\omega;E_x)$ is not a wellordering then we have $x \in \p[\tilde{T}_1]$. If $(\omega;E_x)$ is a wellordering of order type less than $\omega_1^V$ then let $\beta$ be its order type and note that $x \in \p[\tilde{T}_{2,\beta}]$. If $(\omega;E_x)$ is a wellordering of order type greater than or equal to $\omega_1^V$ then let $\lambda$ be its order type. Note that $\lambda < \eta$ because $\eta$ was chosen to be large enough that forcing with $\mathbb{P}$ does not collapse it to be countable. Therefore $\lambda \in [\omega_1^V,\eta)$ and we have $x \in \p[\tilde{T}_{3,\lambda}]$ because $(\omega;E_x) \cong (\lambda;\mathord{\in})$ and the set $\lambda \in \powerset_{\omega_1}(\lambda)^{V[G]}$ is trivially $g_\lambda$-closed. In every case we have shown that $x \in \p[\tilde{T}]$, as desired.
\end{proof}

Finally, we show that if statement \ref{item:equi-combinatorial} holds then statement \ref{item:equi-v-Shelah} holds in $L$:

\begin{lem}\label{lem:combinatorial-implies-v-Shelah-in-L}
 Assume that for every function $f: \omega_1 \to \omega_1$ there is an ordinal $\lambda > \omega_1$ such that for a stationary set of $\sigma \in \powerset_{\omega_1}(\lambda)$ we have $\sigma \cap \omega_1 \in \omega_1$ and $\otp(\sigma) \ge f(\sigma \cap \omega_1)$. Then $\omega_1^V$ is virtually Shelah in $L$.
\end{lem}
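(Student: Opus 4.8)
We are given the combinatorial statement about $\powerset_{\omega_1}(\lambda)$ and we must produce, for each $f : \omega_1 \to \omega_1$ in $L$, a generic elementary embedding witnessing the virtual Shelah property for $\kappa = \omega_1^V$ in $L$. Let me think about how the combinatorial statement connects to elementary embeddings.

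The combinatorial statement says: for every $f : \omega_1 \to \omega_1$ there is $\lambda > \omega_1$ such that stationarily many $\sigma \in \powerset_{\omega_1}(\lambda)$ satisfy $\sigma \cap \omega_1 \in \omega_1$ and $\otp(\sigma) \ge f(\sigma \cap \omega_1)$.

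Key connection: countable $\sigma \prec$ some structure give rise to transitive collapses, which give rise to elementary embeddings. If $\sigma$ is the range of an elementary embedding (or an elementary substructure), collapsing it gives $\bar{M} \cong \sigma$, and the inverse of the collapse is an elementary embedding $\bar{M} \to M$.

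Let me think about what we want. We want a generic embedding $j : V_{\bar\lambda}^L \to M$ (or in the "image of crit" form from Proposition \ref{prop:downward-reformulation}).

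**The approach via the reformulation.**

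Given the footnote in Proposition \ref{prop:downward-reformulation} allowing $\bar\lambda < \kappa$, the reformulated statement \ref{item:kappa-is-image-of-crit-j} seems most natural: we want generic $j : V_{\bar\lambda}^L \to V_\lambda^L$ with $j(\crit(j)) = \omega_1^V$, $f \in \ran(j)$, $f(\crit(j)) \le \bar\lambda$.

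Idea: Take $\sigma \in \powerset_{\omega_1}(\lambda^L)$ a countable elementary substructure of some $V_\lambda^L$ (for appropriate $\lambda$), with $\sigma \cap \omega_1 \in \omega_1$ (call it $\delta = \crit$-to-be) and $\otp(\sigma) \ge f(\delta)$. Collapse: $\pi : \sigma \cong \bar{M}$ transitive, $\bar{M} = V_{\bar\lambda}^L$ for some $\bar\lambda$ (using that $\sigma \prec V_\lambda^L$ and $L$-structure). The inverse $\pi^{-1} : \bar{M} \to V_\lambda^L$ is elementary with critical point $\delta = \sigma \cap \omega_1$, and $\pi^{-1}(\delta) = \omega_1^V$ (since $\sigma \cap \omega_1 = \delta$ maps to $\omega_1$). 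The condition $\otp(\sigma) \ge f(\delta)$ controls $\bar\lambda = \otp(\sigma)$, giving $f(\delta) \le \bar\lambda$. And $f \in \sigma$ ensures $f \in \ran(\pi^{-1})$. This countable embedding exists in $V$, hence generically over $L$ by the countable-structure absoluteness.

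\begin{proof}
Let $f : \omega_1^V \to \omega_1^V$ be a function in $L$; write $\kappa = \omega_1^V$. Define an auxiliary function $f' : \omega_1^V \to \omega_1^V$ in $L$ that dominates $f$ and is strictly increasing, so that $f'(\delta) > \delta$ always. Apply the hypothesis to $f'$ to obtain an ordinal $\lambda > \omega_1^V$ such that stationarily many $\sigma \in \powerset_{\omega_1}(\lambda)$ satisfy $\sigma \cap \omega_1 \in \omega_1$ and $\otp(\sigma) \ge f'(\sigma \cap \omega_1)$. By enlarging $\lambda$ we may assume $\lambda$ is an $L$-cardinal large enough that $V_\lambda^L = L_\lambda$ reflects enough of $L$; in particular we arrange $f \in L_\lambda$ and $\kappa \in L_\lambda$.

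For a club of $\sigma \in \powerset_{\omega_1}(\lambda)$ we have $\sigma \prec L_\lambda$ with $f, \kappa \in \sigma$ and with $\sigma$ closed under the canonical $L$-Skolem functions (so its transitive collapse is a level of $L$). Intersecting with the stationary set from the hypothesis, we fix such a $\sigma$ with $\delta := \sigma \cap \omega_1 \in \omega_1$ and $\otp(\sigma) \ge f'(\delta)$. Since $\sigma \prec L_\lambda$ and $\sigma$ is closed under $L$-Skolem functions, its transitive collapse is $L_{\bar\lambda}$ for some $\bar\lambda$, and letting $\pi : L_\lambda \to L_\lambda$ denote the inverse of the collapse restricted appropriately we obtain an elementary embedding
\[
k := \pi^{-1} : L_{\bar\lambda} \to L_\lambda, \quad \ran(k) = \sigma.
\]
The critical point of $k$ is $\crit(k) = \delta$: indeed $\sigma \cap \omega_1 = \delta \in \omega_1$ forces every ordinal below $\delta$ into $\sigma$ (using $\delta \subset \sigma$), while $\delta \notin \sigma$, and $k(\delta) = \sigma \cap \kappa$-image $= \kappa$ since $\kappa \in \sigma$ is the $\omega_1$ of $L_\lambda$ and collapses to $\delta$. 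Thus $k(\crit(k)) = \kappa$.

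Because $f \in \sigma = \ran(k)$ we may set $\bar f := k^{-1}(f)$, and by elementarity $\bar f : \delta \to \delta$. The order type $\otp(\sigma) = \bar\lambda$ satisfies $\bar\lambda \ge f'(\delta) \ge f(\delta) \ge f(\crit(k))$, i.e.\ $f(\crit(k)) \le \bar\lambda$. We may restrict $k$ so that its domain is exactly $L_{\bar\lambda} = V_{\bar\lambda}^L$ (using that the relevant $\bar\lambda$ is an $L$-cardinal, which follows from the corresponding property of $\lambda$ via elementarity), and likewise view the codomain as $V_\lambda^L$. Thus in $V$ we have an elementary embedding
\[
k : V_{\bar\lambda}^L \to V_\lambda^L \text{ with } k(\crit(k)) = \kappa,\ f \in \ran(k),\ f(\crit(k)) \le \bar\lambda.
\]
Since $\bar\lambda < \kappa = \omega_1^V$, the structure $V_{\bar\lambda}^L$ is countable in $V$, so by the absoluteness of elementary embeddability of countable structures there is, in a generic extension of $L$ by $\Col(\omega, V_{\bar\lambda}^L)$, an elementary embedding
\[
j : V_{\bar\lambda}^L \to V_\lambda^L \text{ with } j(\crit(j)) = \kappa,\ f \in \ran(j),\ f(\crit(j)) \le \bar\lambda.
\]
This is exactly statement \ref{item:kappa-is-image-of-crit-j} of Proposition \ref{prop:downward-reformulation} for $\kappa$ and $f$ in $L$. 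As $f$ was an arbitrary function in $L$, that proposition yields that $\kappa = \omega_1^V$ is virtually Shelah in $L$.
\end{proof}

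**Remarks on the main obstacle.**

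The delicate point is the verification that $\crit(k) = \delta$ together with $k(\delta) = \kappa$, which is where the clause $\sigma \cap \omega_1 \in \omega_1$ does its work: it guarantees $\delta \subset \sigma$ so the collapse fixes everything below $\delta$ and sends $\delta$ to the image of $\omega_1^L$-in-$\sigma$, namely $\kappa$. The other subtle point is arranging that the transitive collapse of $\sigma$ really is a level $V_{\bar\lambda}^L = L_{\bar\lambda}$ of $L$ with $\bar\lambda$ an $L$-cardinal, so that the domain is a genuine rank-initial segment of $L$ as required; this is handled by taking $\sigma$ closed under the canonical $L$-Skolem functions and choosing $\lambda$ to be an appropriate $L$-cardinal. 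I expect the bookkeeping relating $\otp(\sigma)$ to $\bar\lambda$ and then to the bound $f(\crit(k)) \le \bar\lambda$ to be routine once these structural points are in place, and the passage from the $V$-embedding $k$ to the generic $L$-embedding $j$ is immediate from countability and the cited absoluteness.
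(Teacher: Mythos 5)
Your overall strategy coincides with the second half of the paper's proof: apply the hypothesis, take a countable elementary substructure $X \prec L_\lambda$ whose ordinals lie in the stationary set, collapse it via condensation to obtain $k : L_{\bar\lambda} \to L_\lambda$ with $k(\crit(k)) = \kappa$, $f \in \ran(k)$ and $f(\crit(k)) \le \bar\lambda$, and then invoke absoluteness of embeddability of countable structures to verify statement (2) of Proposition \ref{prop:downward-reformulation} in $L$. The critical-point computation and the role of the clause $\sigma \cap \omega_1 \in \omega_1$ are handled correctly (modulo the routine identification of $\sigma \in \powerset_{\omega_1}(\lambda)$ with $X \cap \lambda$ for $X \prec L_\lambda$). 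However, there is a genuine gap at exactly the point where the paper has to do real work: Proposition \ref{prop:downward-reformulation}(2) requires the domain and codomain to be rank initial segments of $L$, i.e.\ $V_{\bar\lambda}^L$ and $V_\lambda^L$, whereas condensation only gives a level $L_{\bar\lambda}$ of the $L$-hierarchy. Your justification that $L_{\bar\lambda} = V_{\bar\lambda}^L$ --- ``the relevant $\bar\lambda$ is an $L$-cardinal, which follows from the corresponding property of $\lambda$ via elementarity'' --- does not work. The collapse height $\bar\lambda = \otp(X \cap \lambda)$ is not in the range of $k$, so elementarity transfers nothing to it; what transfers are first-order properties of the structure $L_{\bar\lambda}$, while ``$\bar\lambda$ is a cardinal in $L$'' and ``$L_{\bar\lambda} = V_{\bar\lambda}^L$'' are statements about the real $L$. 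There is no reason for the collapse height of a countable substructure to be an $L$-cardinal, and even if it were, that would not suffice: $L_\alpha = V_\alpha^L$ requires $\alpha$ to be a beth fixed point of $L$ (for instance $L_{\omega_1^L} \ne V_{\omega_1^L}^L$, since $V_{\omega+2}^L$ already has $L$-cardinality $\aleph_2^L$).

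This is precisely why the paper's proof has two stages, both of which your proof omits. First it shows, by a separate application of the hypothesis (to a function enumerating the levels at which $L$ collapses ordinals onto a putative $\eta$ with $(\eta^+)^L = \kappa$), that $\kappa = \omega_1^V$ is inaccessible in $L$; this guarantees that the ordinals $\beta < \kappa$ with $L_\beta = V_\beta^L$ are cofinal in $\kappa$. Second, it defines in $L$ the function $g : \kappa \to \kappa$ sending $\alpha$ to the least $\beta \ge \max(f(\alpha),\alpha+1)$ with $L_\beta = V_\beta^L$, applies the hypothesis to $g+1$, and restricts the condensation embedding $j : L_{\bar\lambda} \to L_\lambda$ to $L_{\bar\beta}$, where $\bar\beta = g(\bar\kappa) < \bar\lambda$. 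The point is that $\bar\beta$ lies in the range of $g$, so $L_{\bar\beta} = V_{\bar\beta}^L$ holds in the real $L$; one then checks $L_{\bar\lambda} \models (L_{\bar\beta} = V_{\bar\beta})$, transfers this by elementarity to $L_\lambda \models (L_\beta = V_\beta)$ for $\beta = j(\bar\beta)$, and uses the arrangement $L_\lambda = V_\lambda^L$ to conclude $L_\beta = V_\beta^L$, so that the restricted embedding $L_{\bar\beta} \to L_\beta$ really is of the form $V_{\bar\beta}^L \to V_\beta^L$. Without the inaccessibility step you cannot even define $g$ (if $\kappa$ were a successor cardinal in $L$, no such levels would exist cofinally below $\kappa$), and without the restriction device your final embedding is between structures that need not be rank initial segments of $L$, so it does not witness the virtual Shelah property.
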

\begin{proof}
 Let $\kappa = \omega_1^V$. First we will show that $\kappa$ is inaccessible in $L$. Because $\kappa$ is regular in $V$ it is regular in $L$, so by GCH in $L$ it remains to show that $\kappa$ is a limit cardinal in $L$. Suppose toward a contradiction that there is a cardinal $\eta < \kappa$ in $L$ such that $(\eta ^+)^L = \kappa$ and define the function $f : \kappa \to \kappa$ in $L$ such that for every ordinal $\alpha < \kappa$, $f(\alpha)$ is the least ordinal $\beta > \max(\alpha, \eta)$ such that $L_\beta \models \left| \alpha \right| \le \eta$. (Note that $\beta < \kappa$ by G\"{o}del's condensation lemma.)

 By our assumption there is an ordinal $\lambda > \kappa$ such that for a stationary set of $\sigma \in \powerset_\kappa(\lambda)$ in $V$ we have $\sigma \cap \kappa \in \kappa$ and $\otp(\sigma) \ge f(\sigma \cap \kappa)$.  Because the set of all $X \in \powerset_\kappa(L_\lambda)$ such that $X \prec L_\lambda$, $X \cap \kappa \in \kappa$, and $\eta \cup \{\eta\} \subset X$ is club in $\powerset_\kappa(L_\lambda)$ there is such a set $X$ with the additional property that $\bar{\lambda} \ge f(\bar{\kappa})$ where $\bar{\kappa} = X \cap \kappa$ and $\bar{\lambda} = \otp(X \cap \lambda)$. Note that
 \[ \eta < \bar{\kappa} < f(\bar{\kappa}) \le \bar{\lambda} < \kappa < \lambda.\]
 By the condensation lemma, $X$ is the range of an elementary embedding
 \[j : L_{\bar{\lambda}} \to L_\lambda \text{ with } \crit(j) = \bar{\kappa} \text{ and } j(\bar{\kappa}) = \kappa.\]
 By the definition of $f$ we have $L_{f(\bar{\kappa})} \models \left| \bar{\kappa} \right| \le \eta$, and because $f(\bar{\kappa}) \le \bar{\lambda}$ it follows that $L_{\bar{\lambda}} \models \left| \bar{\kappa} \right| \le \eta$.  Then we have $L_\lambda \models \left| \kappa \right| \le \eta$ by the elementarity of $j$, contradicting the fact that $\kappa$ is a cardinal in $V$.

 Now because $\kappa$ is inaccessible in $L$ it follows that $L_\beta = V_\beta^L$ for a cofinal set of ordinals $\beta < \kappa$.  We will use this fact to show that $L$ satisfies statement \ref{item:kappa-is-image-of-crit-j} of Proposition \ref{prop:downward-reformulation}, which is an equivalent condition for $\kappa$ to be virtually Shelah.

 Let $f : \kappa \to \kappa$ be a function in $L$ and define the function $g: \kappa \to \kappa$ in $L$ where for every ordinal $\alpha < \kappa$, $g(\alpha)$ is the least ordinal $\beta \ge \max(f(\alpha), \alpha+1)$ such that $L_\beta = V_\beta^L$.  Applying our assumption to the function $g + 1$, we obtain an ordinal $\lambda > \kappa$ such that for a stationary set of $\sigma \in \powerset_\kappa(\lambda)$ in $V$ we have $\sigma \cap \kappa \in \kappa$ and $\otp(\sigma) > g(\sigma \cap \kappa)$. By increasing $\lambda$ if necessary, we may assume that $L_\lambda = V_\lambda^L$.

 Because the set of all $X \in \powerset_\kappa(L_\lambda)$ such that $X \prec L_\lambda$, $X \cap \kappa \in \kappa$, and $f \in X$ is club in $\powerset_\kappa(L_\lambda)$ there is such a set $X$ with the additional property that $\bar{\lambda} > g(\bar{\kappa})$ where $\bar{\kappa} = X \cap \kappa$ and $\bar{\lambda} = \otp(X \cap \lambda)$. By the condensation lemma, $X$ is the range of an elementary embedding
 \[j : L_{\bar{\lambda}} \to L_\lambda \text{ with } \crit(j) = \bar{\kappa} \text{ and } j(\bar{\kappa}) = \kappa.\]
 Define $\bar{\beta} = g(\bar{\kappa})$ and $\beta = j(\bar{\beta})$ and $\bar{f} = j^{-1}(f)$. Note that
 \[ \bar{\kappa} < \bar{\beta} < \bar{\lambda} < \kappa < \beta < \lambda.\]
 We have $L_{\bar{\beta}} = V_{\bar{\beta}}^L$ because $\bar{\beta}$ is in the range of the function $g$. Therefore $L_{\bar{\lambda}} \models L_{\bar{\beta}} = V_{\bar{\beta}}$ and it follows by the elementarity of $j$ that $L_\lambda \models L_\beta = V_\beta$.  Because $L_\lambda = V_\lambda^L$, this implies $L_\beta = V_\beta^L$. For the elementary embedding $j_1 = j \restriction V_{\bar{\beta}}^L$ we have
 \[ j_1: V_{\bar{\beta}}^L \to V_\beta^L \text{ and } \crit(j_1) = \bar{\kappa} \text{ and } j_1(\bar{\kappa}) = \kappa \text{ and } j_1(\bar{f}) = f.\]
 Let $G \subset \Col(\omega,V_{\bar{\beta}}^L)$ be a $V$-generic filter. Then by the absoluteness of elementary embeddability of countable structures there is an elementary embedding $j_2 \in L[G]$ such that
 \[ j_2: V_{\bar{\beta}}^L \to V_\beta^L \text{ and } \crit(j_2) = \bar{\kappa} \text{ and } j_2(\bar{\kappa}) = \kappa \text{ and } j_2(\bar{f}) = f.\]
 Because $f(\bar{\kappa}) \le g(\bar{\kappa}) = \bar{\beta}$, this elementary embedding $j_2$ witnesses statement \ref{item:kappa-is-image-of-crit-j} of Proposition \ref{prop:downward-reformulation} for $\kappa$ in $L$ with respect to the function $f$.
\end{proof}

This completes the proof of Theorem \ref{thm:v-Shelah}. The following question remains open.

\begin{question}\label{question:BP}
 What is the consistency strength of the theory ZFC + ``every set of reals in $L(\mathbb{R},\uB)$ has the Baire property''?
\end{question}

An upper bound for the consistency strength is ZFC + ``there is a virtually Shelah cardinal'' by Proposition \ref{prop:forward-direction}.
We do not know any nontrivial lower bound.

\section{Proof of Theorem \ref{thm:delta-1-2-equals-uB}}\label{section:proof-of-thm-delta-1-2-equals-uB}

When forcing with the Levy collapse over $L$, the virtual Shelah property can be used to limit the complexity of the universally Baire sets of reals in the generic extension:

\begin{prop}\label{prop:uB-subset-Delta-1-2}
 Let $\kappa$ be a virtually Shelah cardinal in $L$ and let $G \subset \Col(\omega,\mathord{<}\kappa)$ be an $L$-generic filter. Then $L[G] \models \uB \subset {\bfDelta}^1_2$.
\end{prop}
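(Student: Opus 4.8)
The plan is to combine the reflection provided by Lemma \ref{lem:forcing} with the absoluteness of wellfoundedness, and then to carry out the complexity estimate of Feng, Magidor and Woodin \cite{FenMagWoo} over the ground model $L$. Working in $L$, let $A$ be universally Baire in $L[G]$. By Lemma \ref{lem:forcing} there is an ordinal $\alpha < \kappa$ and a $\kappa$-universally Baire set $A_0$ in $L[G \restriction \alpha]$ with $A = A_0^{L[G]}$. Since $\alpha < \kappa = \omega_1^{L[G]}$, the poset $\Col(\omega, \mathord{<}\alpha)$ is countable in $L[G]$, so $G \restriction \alpha$ is coded by a real $r$ and $L[G \restriction \alpha] = L[r]$. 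As $\kappa$ is inaccessible in $L[r]$, the set $A_0$ is $\Col(\omega,\mathord{<}\kappa)$-Baire there, so I fix a $\Col(\omega,\mathord{<}\kappa)$-absolutely complementing pair of trees $(T, \tilde{T}) \in L[r]$ on $\omega \times \kappa$ with $\p[T] = A_0$; then $A = \p[T]^{L[G]}$ and $\omega^\omega \setminus A = \p[\tilde{T}]^{L[G]}$.

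The core reduction is an absoluteness observation. For any real $x \in L[G]$, the model $L[r,x]$ is a transitive model of ZFC containing both $T$ and $x$ (because $T \in L[r] \subseteq L[r,x]$), so by the absoluteness of wellfoundedness of the section $T_x$ we have
\[
 x \in A \iff L[r,x] \models x \in \p[T], \qquad x \notin A \iff L[r,x] \models x \in \p[\tilde{T}].
\]
These two conditions are mutually exclusive and exhaustive over the reals of $L[G]$: every real $x \in L[G]$ lies in $L[r][h]$ for some $\Col(\omega,\mathord{<}\beta)$-generic $h$ over $L[r]$ with $\beta < \kappa$, where $\p[T] \sqcup \p[\tilde{T}] = \omega^\omega$ since $(T,\tilde{T})$ is complementing, and this transfers to $L[r,x]$ by absoluteness. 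Hence it suffices to show that the single predicate $\Phi(x) \equiv$ ``$L[r,x] \models x \in \p[T]$'' is $\bfSigma^1_2(r)$; the analogous predicate for $\tilde{T}$ is then its complement, so $A$ is $\bfDelta^1_2(r)$.

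To see that $\Phi$ is $\bfSigma^1_2(r)$ I would reflect the statement ``$L[r,x] \models x \in \p[T]$'' to wellfounded, correct countable approximations, exactly as in Feng, Magidor and Woodin \cite[Theorem 3.3]{FenMagWoo}. Here it is essential that the ground model is $L$: over $L$ the predicate ``$y \in L[r]$'' and the predicate ``$h$ is $\Col$-generic over $L[r]$'' lie within $\bfSigma^1_2(r)$ and $\bfPi^1_2(r)$ respectively by Shoenfield absoluteness together with the minimality and condensation properties of $L[r]$, and the canonical complementing pair is correctly computed in the relevant $L[r]$-generic extensions. Combining this estimate with the reduction above yields $A \in \bfDelta^1_2$, as required.

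The main obstacle is precisely this $\bfSigma^1_2$ estimate of $\Phi$. Although $A$ and its complement are both $\omega_1$-Suslin via trees in $L[r]$, mere $\omega_1$-Suslinity is far from enough to conclude $\bfSigma^1_2$ (under CH every set of reals is trivially $\omega_1$-Suslin), so the representation $A = \p[T]^{L[G]}$ cannot be used naively: a witnessing branch through $T_x$ has values below $\kappa = \omega_1^{L[G]}$ and so appears only at a level of $L[r,x]$ that is uncountable in $L[G]$, not codable by a real. What rescues the argument is that we are over $L$, so the height-$\kappa$ search for a branch can be replaced by a real witness coming from a countable wellfounded correct approximation, and the danger of such an approximation using a ``cheating'' pair of trees is eliminated by the absolute complementation of $(T,\tilde{T})$ together with the condensation of $L[r]$. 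Getting this correctness statement exactly right is the delicate point, and it is where the proof leans most heavily on the \cite{FenMagWoo} analysis.
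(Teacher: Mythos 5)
Your setup is sound and matches the paper's opening moves: invoking Lemma \ref{lem:forcing}, writing $L[G\restriction\alpha]=L[r]$, fixing a $\Col(\omega,\mathord{<}\kappa)$-absolutely complementing pair $(T,\tilde{T})\in L[r]$, and reducing membership in $A$ and its complement to the two absolute statements about $L[r,x]$. The genuine gap is the step you yourself flag as ``delicate'': the claim that $\Phi(x)\equiv$ ``$L[r,x]\models x\in\p[T]$'' is ${\bfSigma}^1_2(r)$. The tree $T$ is an uncountable object of $L[r]$ chosen with no canonical definition --- it is merely \emph{some} pair witnessing that $A_0$ is $\kappa$-universally Baire --- so $\Phi$ is not a statement about reals until you explain how a real quantifier can recover $T$, or a sufficient approximation of it. Your proposal to reflect to ``wellfounded, correct countable approximations, exactly as in \cite[Theorem 3.3]{FenMagWoo}'' does not supply this: the trees appearing in that argument are canonically definable (Shoenfield-style trees for ${\bfSigma}^1_2$ sets), whereas here a countable model containing \emph{a} pair of trees it believes to be absolutely complementing, with $x$ in the projection of the first, certifies nothing --- $L[r]$ contains many absolutely complementing pairs projecting to different $\kappa$-universally Baire sets, and condensation cannot distinguish which one approximates your $T$. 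Asserting that the ``cheating trees'' danger ``is eliminated by the absolute complementation of $(T,\tilde{T})$ together with the condensation of $L[r]$'' is naming the problem, not solving it; as written, the central step of the proof is missing.

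What fills this gap in the paper is a characterization that avoids referring to $T$ at all. Let $B_0=\mathbb{R}^{L[r]}\setminus A_0=\p[\tilde{T}]^{L[r]}$, a set of reals of $L[r]$, hence countable in $L[G]$ and codable there by a real $b$. The key claim is: $x\in A$ iff there exists \emph{some} tree $T'\in L[r]$ with $x\in\p[T']$ and $\p[T']\cap B_0=\emptyset$. The side condition is exactly what disarms cheating trees: disjointness of $\p[T']$ from $B_0$ forces $\p[T']\cap\p[\tilde{T}]=\emptyset$ in $L[r]$, which by wellfoundedness absoluteness of the merged tree holds in $L[G]$ as well, whence $\p[T']\subset\p[T]=A$ there; conversely $T'=T$ witnesses the forward direction. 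This characterization is $\Sigma_1$ in the parameters $r$, $B_0$, $x$ (using that ``$v\in L[r]$'' is $\Sigma_1(r)$ and that non-membership in a projection is witnessed by a rank function), so $\HC\prec_{\Sigma_1}V$ converts it into a $\Sigma^1_2(r,b)$ definition of $A$; the symmetric claim with $A_0$ in place of $B_0$ makes the complement $\Sigma^1_2$, giving ${\bfDelta}^1_2$. Note also that the extra real parameter $b$ appears unavoidable --- your claim of lightface-in-$r$ complexity ${\bfSigma}^1_2(r)$ for $\Phi$ has no visible justification --- though since the target is the boldface pointclass this costs nothing.
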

\begin{proof}
 Let $A$ be a universally Baire set of reals in $L[G]$. Then by Lemma \ref{lem:forcing} there is an ordinal $\alpha < \kappa$ and a $\kappa$-universally Baire set of reals $A_0$ in $L[G\restriction\alpha]$ such that $A = A_0^{L[G]}$. Increasing $\alpha$ if necessary, we may assume that it is a successor ordinal, so $L[G\restriction\alpha] = L[z]$ for some real $z \in L[G]$. Take a $\Col(\omega,\mathord{<}\kappa)$-absolutely complementing pair of trees $(T, \tilde{T})$ in $L[z]$ such that $\p[T]^{L[z]} = A_0$ and therefore $\p[T]^{L[G]} = A$. Define the sets of reals
 \[B_0 = \p[\tilde{T}]^{L[z]} = \mathbb{R}^{L[z]} \setminus A_0 \text{ and } B = \p[\tilde{T}]^{L[G]} = \mathbb{R}^{L[G]} \setminus A.\]
 
 We claim that for every real $x \in L[G]$ we have $x \in A$ if and only if there is a tree $T' \in L[z]$ such that $x \in \p[T']$ and $\p[T'] \cap B_0 = \emptyset$. To prove the forward direction of the claim, note that if $x \in A$ then we can simply let $T' = T$. To prove the reverse direction of the claim, let $x$ be a real in $L[G]$ and assume that $x \in \p[T']$ for some tree $T' \in L[z]$ such that $\p[T'] \cap B_0 = \emptyset$.  Because $\p[T'] \cap B_0 = \emptyset$ and $B_0 = \p[\tilde{T}]^{L[z]}$ we have $\p[T'] \cap \p[\tilde{T}] = \emptyset$ in $L[z]$. The statement $\p[T'] \cap \p[\tilde{T}] = \emptyset$ is absolute between $L[z]$ and $L[G]$ by the absoluteness of wellfoundedness of the tree of all triples $(r, s_1, s_2)$ such that $(r,s_1) \in T'$ and $(r,s_2) \in \tilde{T}$, so because $x \in \p[T']$ we have $x \notin \p[\tilde{T}]$ and therefore $x \in \p[T] = A$.
 
 We can use the claim to define $A$ in $L[G]$ without reference to $T$. Because the class $L[z]$ is $\Sigma_1(z)$
 and the statement $y \notin \p[T']$ (where $y$ is a real) is witnessed by the existence of a rank function for the section tree $T_y$, it follows from the claim that $A$ is $\Sigma_1(z,B_0)$. Because $\text{HC} \prec_{\Sigma_1} V$, this implies that $A$ is $\Sigma_1^{\text{HC}}(z,B_0)$ and is therefore $\Sigma^1_2(z,b)$ where $b$ is a real coding $B_0$. A symmetric argument shows that $B$ is ${\bfSigma}^1_2$ in $L[G]$, so $A$ and $B$ are both ${\bfDelta}^1_2$ in $L[G]$.
\end{proof}

Proposition \ref{prop:uB-subset-Delta-1-2} is not useful as a relative consistency result because the theory ZFC + $(\uB \subset {\bfDelta}^1_2)$ is equiconsistent with ZFC by the theorem of Larson and Shelah mentioned in the introduction. However, we can combine Proposition \ref{prop:uB-subset-Delta-1-2} with the theorem of Feng, Magidor, and Woodin mentioned in the introduction to obtain an equiconsistency result at the level of a $\Sigma_2$-reflecting virtually Shelah cardinal:

\begin{proof}[Proof of Theorem \ref{thm:delta-1-2-equals-uB}]
 Assume there is a $\Sigma_2$-reflecting virtually Shelah cardinal $\kappa$. Then $\kappa$ is clearly $\Sigma_2$-reflecting in $L$ and it is virtually Shelah in $L$ by Lemma \ref{lem:absolute-to-L}. Let $G \subset \Col(\omega,\mathord{<}\kappa)$ be an $L$-generic filter. Because $\kappa$ is  $\Sigma_2$-reflecting in $L$ we have ${\bfDelta}^1_2 \subset \uB$ in $L[G]$ by the proof of Feng, Magidor and Woodin \cite[Theorem 3.3]{FenMagWoo}. On the other hand, because $\kappa$ is virtually Shelah in $L$ we have $\uB \subset {\bfDelta}^1_2$ in $L[G]$ by Proposition \ref{prop:uB-subset-Delta-1-2}. 

 Conversely, assume  $\uB = {\bfDelta}^1_2$. Because ${\bfDelta}^1_2 \subset \uB$, the cardinal $\omega_1^V$ is $\Sigma_2$-reflecting in $L$ by the proof of Feng, Magidor, and Woodin \cite[Theorem 3.3]{FenMagWoo}. Also because  ${\bfDelta}^1_2 \subset \uB$, every ${\bfSigma}^1_2$ set of reals has the perfect set property by Feng, Magidor, and Woodin \cite[Theorem 2.4]{FenMagWoo}. Combining this with the assumption that $\uB \subset {\bfDelta}^1_2$ shows that every $\uB$ set has the perfect set property, so  $\omega_1^V$ is virtually Shelah in $L$ by Lemmas \ref{lem:PSP-uB-implies-combinatorial} and \ref{lem:combinatorial-implies-v-Shelah-in-L}.
\end{proof}

\section{Acknowledgments}

We thank Stamatis Dimopoulos for a conversation with the second author that led to the current form of Definition \ref{defn:kappa-is-crit-j} in which $\kappa$ is $\crit(j)$ rather than $j(\crit(j))$, making the resemblance to Shelah cardinals more apparent.


\end{document}